\def\version{}
\newcommand{\notyet}[1]{}
\DeclareSymbolFont{AMSb}{U}{msb}{m}{n}
\DeclareSymbolFontAlphabet{\mathbb}{AMSb}
\newcommand\supp{\mathop{\rm supp}}
\newcommand\pa{\partial}
\newcommand\ov{\overline}
\newcommand\om{\omega}
\newcommand{\at}[1]{\vert\sb{\sb{#1}}}
\def\Re{{\rm Re\, }}
\def\Im{{\rm Im\,}}
\providecommand\C{{\mathbb C}}
\renewcommand\C{{\mathbb C}}
\newcommand{\R}{{\mathbb R}}
\newcommand{\N}{{\mathbb N}}
\newcommand{\abs}[1]{\vert #1 \vert}
\newcommand{\Norm}[1]{\left\Vert #1 \right\Vert}
\newcommand{\norm}[1]{\Vert #1 \Vert}
\newcommand{\const}{{\rm const}}
\newcommand\sothat{{\rm :}\ }
\providecommand{\ltor}[1]{
\ifnum #1=1{\it i}\else\ifnum #1=2{\it ii}\else\ifnum #1=3{\it iii}
\else\ifnum #1=4 {\it iv}\fi\fi\fi\fi
}
\DeclareMathSymbol{\varPhi}{\mathord}{letters}{"08}
\DeclareMathSymbol{\varOmega}{\mathord}{letters}{"0A}
\font\thf cmssdc10 at 11pt
\theoremstyle{plain}
\newtheorem{theorem}{\thf Theorem}[section]
\newtheorem{lemma}[theorem]{\thf Lemma}
\newtheorem{corollary}[theorem]{\thf Corollary}
\newtheorem{proposition}[theorem]{\thf Proposition}
\theoremstyle{definition}
\newtheorem{definition}[theorem]{Definition}
\theoremstyle{remark}
\newtheorem{remark}[theorem]{Remark}
\makeatletter\@addtoreset{equation}{section}
\begin{document}

\title{
On Global attraction to solitary waves 
\\
for Klein-Gordon equation with concentrated nonlinearity
}

\author{
{\sc Elena Kopylova}
\footnote{Research supported by the Austrian Science Fund (FWF) under Grant No. P27492-N25 
and RFBR grant No. 16-01-00100
}
\\ 
{\it\small Faculty of Mathematics, Vienna University and IITP RAS
}}

\date{\version}

\maketitle

\begin{abstract}
The global attraction is proved for the nonlinear  
 3D Klein-Gordon equation with a 
nonlinearity concentrated at one point.
Our main result is the convergence  of each "finite energy solution"  to the manifold of 
all solitary waves  as $t\to\pm\infty$.
This global attraction is caused by the nonlinear energy transfer from lower harmonics 
to the continuous spectrum and subsequent dispersion radiation.

We justify this mechanism by the following strategy based on \emph{inflation of spectrum by the nonlinearity}.
We show that any {\it omega-limit trajectory} has the time-spectrum in the spectral gap $[-m,m]$ and
satisfies the original equation.  Then the application of the Titchmarsh Convolution Theorem reduces
the spectrum of each omega-limit trajectory to a single frequency  $\omega\in[-m,m]$.
\end{abstract}

\section{Introduction}
\label{int-results}
The paper concerns a nonlinear interaction of the Klein-Gordon field with a point oscillator.
The point interaction are  widely used in physical works. One of the well-known application
in dimension one is the Kronig-Penney model \cite{KrP}.
I n 3D case a rigorous mathematical definition of point interactions  was given
by Berezin and Faddeev \cite{BF}.
For the numerous literature concerning the  models with a  point interactions 
we refer to \cite{AGHH}.

In the case of  the  Schr\"odinger equations  the  nonlinear point interaction was justified
 in \cite{CFNT1,CFNT3} as a scaling limit of a regularized nonlinear Schr\"odinger dynamics.
We suppose that for the  Klein-Gordon equations   a justification can be done by suitable modification of methods 
\cite{CFNT1,CFNT3}, but it still remains an open question. 

 We consider the system  governed by the following equations
\begin{equation}\label{iKG}
\left\{\begin{array}{c}
\ddot \psi(x,t)=(\Delta-m^2)\psi(x,t)+\zeta(t)\delta(x)\\\\
\lim\limits_{x\to 0}(\psi(x,t)-\zeta(t)G(x))=F(\zeta(t))
\end{array}\right|\quad x\in\R^3,\quad t\in\R,\quad m>0,
\end{equation} 
where  $G(x)$ is the Green's function of operator $-\Delta+m^2$ in $\R^3$, i.e.
\begin{equation}\label{Green}
  G(x)=\frac{e^{-m|x|}}{4\pi|x|}.
\end{equation}
The nonlinearity  $F(\zeta)$ admits a real-valued potential
\begin{equation}\label{FU}
 F(\zeta)=\partial_{\ov\zeta}U(\zeta), \quad\zeta\in\C,\quad U\in C^2(\C),
\end{equation} 
 where  $\partial_{\ov\zeta}:=\frac 12(\pa_1+i\pa_2)$ with 
 $\zeta_1:=\Re\zeta$ and $\zeta_2:=\Im\zeta$.
 We assume that the potential $U(\zeta)$ is ${\rm U}(1)$-invariant, where ${\rm U}(1)$
 stands for the unitary group $e^{i\theta}$, $\theta\in\R~{\rm mod}~2\pi$.
  Namely, we assume that there exists $u\in C^2(\R)$ such that
\begin{equation}\label{U1}
U(\zeta)=u(|\zeta|^2),\quad\zeta\in\C.
\end{equation}
Conditions (\ref{FU}) and (\ref{U1}) imply that
\begin{equation}\label{F1}
F(\zeta)=b(|\zeta|^2)\zeta,\quad\zeta\in\C.
\end{equation}
where $b(\cdot)=u'(\cdot)\in C^1(\R)$ is real valued. Therefore
\begin{equation}\label{F11}
F(e^{i\theta}\zeta)=e^{i\theta}F(\zeta),\quad\theta\in\R,\quad\zeta\in\C.
\end{equation}
This symmetry implies that
$e^{i\theta}\psi(x,t)$ 
is a solution to the system (\ref{iKG}) if $\psi(x,t)$ is.
The system (\ref{iKG}) admits soliton solutions $\psi_{\omega}(x)e^{-i\omega t}$
with some $\omega\in (-m,m)$ and $\psi_\om\in L^2(\R^3)$.
Our main goal is the global attraction 
\[
\psi(x,t)\sim\psi_{\omega_{\pm}}(x)e^{-i\omega_{\pm} t},\quad t\to\pm\infty,
\]
for all  solutions from the Hilbert space ${\cal D}_F$ (see Definition \ref{def-space}),
where the asymptotics hold in local $L^2$-seminorms.

Similar global attraction was established  for the first time i) in \cite{Kom91}--\cite{KK2010a}
for 1D wave and 1D Klein-Gordon equations coupled to a nonlinear oscillator, 
ii) in \cite{KK2009, KK2010b} for t nD Klein-Gordon and Dirac equations with mean field  interaction, 
and iii) in \cite{C2013} for discrete in space and in time nD Klein-Gordon equation  equations  
interacting with a nonlinear oscillator.

In the context of the Schr\"odinger and wave equations
the point interaction of type (\ref{iKG})
was introduced in \cite{AAFT, AAFT1,  AGHH, KP, NP, NP1},
where the well-posedness of the Cauchy problem and the blow up solutions  were studied. 
In our recent paper \cite{K15} we proved the  well-posedness  for system (\ref{iKG}).

The asymptotic stability of solitary waves  have been obtained in \cite{BKKS,K09,KKS12}
for  1D Schr\"odinger equation coupled to nonlinear oscillator, in \cite{K10}
for  1D discrete Klein-Gordon equation coupled to nonlinear oscillator, and in \cite{ANO,ANO1}
for 3D Schr\"odinger equation with concentrated nonlinearity.

Global attraction to stationary state  for 3D wave equation  with the point interaction
has been proved for the first time in our recent paper \cite{K16}. 
However, the global attraction  for 3D Klein-Gordon equation equations 
with the point interaction has not been studied up to now.

Let us comment on our methods.
First,  we represent the solution  as a sum of  dispersive and  singular components.
The dispersive component is a solution to  the free Klein-Gordon equation, 
and the singular component is a solution to the coupled system of  the Klein-Gordon equation with delta-like sources 
and of  the first-order  nonlinear integro-differential equation which control the dynamics of the coefficients $\zeta(t)$
(see equation (\ref{delay})). The right  hand side of this equation is  the  value of the dispersive component 
at the singular point $x=0$.

The dispersive component  vanishes asymptotically in the  local seminorms 
and one remains with the contribution of the singular part only.
We show that the singular component  converges in the chosen topology to a solitary wave
which is a standing wave with a single frequency. 

Further, we  extract the omega-limit trajectories  of the singular component via the compactness argument.
Here the key role is played by  the absolute continuity of the spectral density 
$\tilde \zeta(\omega)$ outside the spectral gap.
The absolute continuity is a nonlinear version of Kato's theorem on the absence
of the embedded eigenvalues and provides the dispersion decay for the high energy
component. Any omega-limit trajectory  is the solution to (\ref{iKG}) with   
a function  $\eta(t)$ instead of $\zeta(t)$, 
which is a solution to a homogeneous nonlinear integro-differential equation
(\ref{zetalimeq}). The Fourier transform of $\eta(t)$ is a quasimeasure.
The theory of quasimeasures helps to prove the spectral inclusion (\ref{wFs}).

Finally, we apply the Titchmarsh convolution theorem (see  \cite[Theorem 4.3.3]{H})
to conclude that the support of the distributional Fourier transform
of each omega-limit trajectory is a singleton, i.e.
the spectrum of each omega-limit trajectory has a single frequency.
The Titchmarsh theorem controls the inflation of spectrum by the nonlinearity. 
Physically, these arguments justify the following binary mechanism of the
energy radiation, which is responsible for the attraction to the solitary waves: 
(i) the nonlinear energy transfer from the lower to higher harmonics, 
and (ii) the subsequent dispersion decay caused by the energy radiation to infinity.

The general scheme of the proof bring to mind the approach of \cite{KK2007,KK2010a}.
Nevertheless the Klein-Gordon equation  
with the point interaction requires new ideas due to a more singular character.
As a consequence, the formulation of the problem and the techniques used 
are not a straightforward generalization of the one-dimensional result \cite{KK2007}
and  the result \cite{KK2010a} for 3D equation with mean field interaction.

Our paper is organized as follows. In Section \ref{sect-results} we formulate the main theorem.
In Section \ref{sect-splitting} we separate the first dispersive component and study its decay properties.
In Section \ref{sect-bound} we construct spectral representation for the remaining singular component, 
and prove absolute continuity of its spectrum outside the spectral gap.
In section \ref{sect-comp} we establish compactness for the  singular component.
In Section \ref{sect-spectral} we study omega-limit trajectories of the solution.
In Section \ref{sect-proof}  we prove the main theorem and
in Appendix  we calculate some  Fourier transforms.
\section{Main results}
\label{sect-results}
\subsection*{Model}
We fix a nonlinear function $F:\C\to\C$ and define the domain
\begin{equation}\label{q}
D_F=\{\psi\in L^2(\R^3):\psi(x)=\psi_{reg}(x)+\zeta G(x),~~\psi_{reg}\in H^2(\R^3),~~
 \zeta\in\C,~~\psi_{reg}(0)=F(\zeta)\}
\end{equation}
which generally is not a linear space.
Let $H_F$ be a nonlinear operator on the domain $D_F$ defined by 
\begin{equation}\label{HF}
 H_F \psi=(\Delta-m^2)\psi_{reg},\quad\psi\in D_F.
\end{equation}
The system (\ref{iKG}) for $\psi(t)\in D_F$ reads
\begin{equation}\label{KG}
\ddot \psi(x,t)=H_F \psi(x,t),\quad x\in\R^3,\quad t\in\R.
\end{equation}
Let us introduce the phase space  for equation (\ref{KG}).
Denote the space
\begin{equation}\label{dD}
\dot D=\{\pi\in L^2(\R^3):\pi(x)=\pi_{reg}(x)+\eta G(x),
~~\pi_{reg}\in H^1(\R^3), ~~\eta\in\C\}
\end{equation}
Obviously, $D_F\subset\dot D$.
\begin{definition}\label{def-space}
\begin{enumerate}
\item
${\cal D}_F$ is the space of the states 
$\Psi=(\psi(x),\pi(x))\in D_F\oplus\dot D$ equipped with the finite norm
\begin{equation}\label{def-e}
\Vert\Psi\Vert_{{\cal D}_F}^2:=\norm{\psi_{reg}}_{H^2(\R^3)}^2+
\norm{\pi_{reg}}_{H^1(\R^3)}^2+|\zeta|^2+|\eta|^2.
\end{equation}
\item
${\cal X}$ is the Hilbert space of the states 
$\Psi=(\psi(x),\pi(x))\in H^2(\R^3)\oplus H^{1}(\R^3)$ equipped with the finite norm
\begin{equation}\label{def-e1}
\Vert\Psi\Vert_{{\cal X}}^2:=\norm{\psi}_{H^2(\R^3)}^2+\norm{\pi}_{H^{1}(\R^3)}^2.
\end{equation}
\end{enumerate}
\end{definition}
\begin{definition}
$H^{s}_{loc}=H^s_{loc}(\R^3)$,  $s=0,1,2,...$, denotes the Fr\'echet  space with finite seminorms
\begin{equation}\label{def-e-r1}
\norm{\psi }_{H^s_R}:=\norm{\psi}_{H^s(B_R)},\quad R>0,
\end{equation}
where $B_R$ is the ball of radius $R$.
\end{definition}
Denote $L^2_{loc}=H^{0}_{loc}$, ${\cal L}^2_{loc}=L^2_{loc}\oplus L^{2}_{loc}$ 
and ${\cal X}_{loc}=H^2_{loc}\oplus H^{1}_{loc}$. We set for $\Psi=(\psi,\pi)$
\[
\Vert\Psi\Vert_{{\cal L}^2_R}^2=\norm{\psi}_{L^2_R}^2+\norm{\pi}_{L^{2}_R}^2,\quad\quad
\Vert\Psi\Vert_{{\cal X}_R}^2=\norm{\psi}_{H^2_R}^2+\norm{\pi}_{H^{1}_R}^2,\quad R>0.
\]
\begin{remark}
The spaces ${\cal L}^2_{loc}$ are metrisable. 
The metrics can be defined by
\begin{equation}\label{mE}
{\rm dist}_{{\cal L}^2_{loc}}(\Psi_1,\Psi_2)=\sum\limits_{R=1}^{\infty}2^{-R}
\frac{\Vert\Psi_1-\Psi_2\Vert_{{\cal L}^2_R}}{1+\Vert \Psi_1-\Psi_2\Vert_{{\cal L}^2_R}}.
\end{equation}
\end{remark}
\subsection*{Global well-posedness}
For the global well-posedness, we assume that 
\begin{equation}\label{bound-below}
U(\zeta)\to\infty,\quad |\zeta|\to\infty.
\end{equation}
Denote $\Vert\cdot\Vert=\Vert\cdot\Vert_{L^2(\R^3)}$.
The next theorem is proved in \cite{K15}.
\begin{theorem}\label{theorem-well-posedness}
Let conditions (\ref{FU}), (\ref{U1}) and (\ref{bound-below}) hold.
Then 
\begin{enumerate}
\item
For every initial data $\Psi(0)=\Psi_0=(\psi_0,\pi_0)\in {\cal D}$  the Cauchy problem for
(\ref{KG}) has a unique solution $\psi(t)$ such that 
\[
\Psi(t)=(\psi(t),\dot\psi(t))\in C(\R,{\cal D}_F).
\]
\item
The energy is conserved:
\begin{equation}\label{ec}
{\cal H}(\Psi(t)):=
\frac 12 \Big(\Vert\dot\psi(t)\Vert^2+\Vert\nabla\psi_{reg}(t)\Vert^2
+m^2\Vert\psi_{reg}(t)\Vert^2\Big)+U(\zeta(t))=\const, \quad t\in\R.
\end{equation}
\item
The following a priori bound holds
\begin{equation}\label{apb}
|\zeta(t)|\le C(\Psi_0)\quad t\in\R. 
\end{equation}
\end{enumerate}
\end{theorem}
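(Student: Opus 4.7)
The plan is to reduce (\ref{KG}) to a Volterra-type integro-differential equation for the coefficient $\zeta(t)$ alone, use a fixed-point argument to obtain local existence, and then promote the local solution to a global one via the energy identity (\ref{ec}) combined with the coercivity hypothesis (\ref{bound-below}).

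First I would represent the solution in the form $\psi(x,t)=\psi_{\rm free}(x,t)+\int_0^t \mathcal{K}(x,t-s)\zeta(s)\,ds$, where $\psi_{\rm free}$ solves the homogeneous Klein-Gordon equation with initial data $(\psi_0,\pi_0)$, and $\mathcal{K}$ is the retarded fundamental solution of $\pa_t^2-\Delta+m^2$ acting on $\zeta(s)\delta(x)$. This representation satisfies the first line of (\ref{iKG}) identically, and plugging it into the boundary condition $\lim_{x\to 0}(\psi-\zeta G)=F(\zeta)$ and subtracting the $\zeta(t)G(x)$ singularity (which is present in $\mathcal{K}$ via its short-time structure) produces a closed nonlinear equation of the schematic form
\begin{equation*}
 c_0\,\zeta(t)+\int_0^t \mathcal{G}(t-s)\zeta(s)\,ds = F(\zeta(t))-\psi_{\rm free}(0,t),
\end{equation*}
with $\mathcal{G}$ a locally integrable kernel computed from $\mathcal{K}$. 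Since $F\in C^1$ by (\ref{FU})--(\ref{F1}) and $\psi_{\rm free}(0,t)$ is continuous in $t$ for $\psi_0\in H^2$, a contraction mapping on $C([0,T],\C)$ for small $T=T(\Psi_0)$ yields a unique local $\zeta(t)$, from which the full state $\Psi(t)\in\mathcal{D}_F$ is reconstructed.

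Next I would establish (\ref{ec}) by pairing (\ref{KG}) formally with $\dot{\ov\psi}$, integrating on $\R^3\setminus B_\varepsilon(0)$, and passing to $\varepsilon\to 0$. The boundary integral at the origin picks up the Wronskian of $\psi_{reg}$ and $\zeta G$, which, by the constraint $\psi_{reg}(0)=F(\zeta)$ together with $F=\pa_{\ov\zeta}U$ from (\ref{FU}), collapses to $\frac{d}{dt}U(\zeta(t))$; all other contributions assemble into the free-field energy of the regular part. Once (\ref{ec}) is in hand, the hypothesis $U(\zeta)\to\infty$ as $|\zeta|\to\infty$ forces $|\zeta(t)|\le C(\Psi_0)$ on the interval of existence, which is (\ref{apb}). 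Control of $\zeta$ then linearizes the evolution of $\psi_{reg}$ (a linear Klein-Gordon equation with bounded singular source), and a standard continuation argument globalizes the solution to all of $\R$.

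The main obstacle I anticipate is the rigorous justification of the expansion near $x=0$. Specifically, one must verify that the singular part of $\int_0^t \mathcal{K}(x,t-s)\zeta(s)\,ds$ is exactly of the form $\zeta(t)G(x)$ plus an $H^2_{\rm loc}$ remainder whose value at $0$ is continuous in $t$; only then does the trace identity $\psi_{reg}(0,t)=F(\zeta(t))$ make sense and propagate in time, keeping $\Psi(t)$ in $\mathcal{D}_F$. This requires a careful short-distance analysis of the Klein-Gordon propagator convolved with a $\delta$-source, and is the step where the 3D point-interaction character of the problem genuinely enters; the remaining energy-method arguments are then essentially standard.
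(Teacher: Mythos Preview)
The paper does not prove this theorem; it is quoted from \cite{K15}. However, the paper does display (in Section~\ref{sect-splitting}) the key ingredients of that proof, namely the representation (\ref{sol_sum}) and the reduced equation (\ref{delay}) for $\zeta(t)$, so your overall strategy --- split off the free evolution, derive a closed Volterra-type equation for $\zeta$, get local existence by a fixed point, and globalize via the energy identity and (\ref{bound-below}) --- is exactly the right one and matches \cite{K15}.

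That said, two points in your sketch are inaccurate and would cause trouble if you carried them out as written.

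First, the schematic reduced equation you wrote down, $c_0\,\zeta(t)+\int_0^t\mathcal{G}(t-s)\zeta(s)\,ds=F(\zeta(t))-\psi_{\rm free}(0,t)$, has the wrong structure: the actual equation (\ref{delay}) is a genuine first-order ODE in $\zeta$, with leading term $\dot\zeta(t)/(4\pi)$. The derivative appears precisely in the short-distance step you correctly flag as the main obstacle: the dominant piece of the retarded 3D Klein--Gordon kernel acting on $\zeta(s)\delta(x)$ is $\zeta(t-|x|)/(4\pi|x|)$, and after subtracting $\zeta(t)G(x)=\zeta(t)e^{-m|x|}/(4\pi|x|)$ the limit $x\to 0$ produces $(-\dot\zeta(t)+m\zeta(t))/(4\pi)$, not a constant multiple of $\zeta(t)$. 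This changes the character of the local existence argument (one solves an ODE with a prescribed initial value $\zeta(0)=\zeta_0$, in $C^1$ rather than $C$), and without the $\dot\zeta$ term your implicit equation $c_0\zeta-F(\zeta)=\text{known}$ need not determine $\zeta$ uniquely.

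Second, you assert that $\psi_{\rm free}(0,t)$ is continuous because ``$\psi_0\in H^2$''. But $\psi_0\in D_F$ is \emph{not} in $H^2$: it carries the singular piece $\zeta_0 G(x)$. One must split $\psi_{\rm free}$ into the part launched by $(\psi_{0,reg},\pi_{0,reg})\in H^2\oplus H^1$ (for which the trace at $0$ is clear) and the part launched by $(\zeta_0 G,\dot\zeta_0 G)$, and check separately, using the explicit Klein--Gordon propagator (\ref{Gbess}), that the latter is regular at the origin for $t>0$ with a continuous limit as $t\to 0^+$. This is done in \cite{K15} and is exactly the ``careful short-distance analysis'' you anticipate, but it is not a consequence of $H^2$ regularity of the data.
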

\subsection*{Solitary waves and the main theorem}
\begin{definition}\label{soldef}
(i) The solitary waves of equation  (\ref{KG}) are solutions of the form
\begin{equation}\label{sol1}
\psi(x,t)=e^{-i\omega t}\psi_\omega(x),\quad \omega\in\R,\quad \psi_\omega\in L^2(\R^3).
\end{equation}
(ii) The solitary manifold is the set
${\bf S}=\left\{\Psi_\omega=(\psi_\omega, -i\omega\psi_\omega)\sothat\omega\in\R\right\}$, where $\psi\sb\omega$
are the  amplitudes of solitary waves.
\end{definition}
The identity (\ref{F11}) implies that the set ${\bf S}$ is invariant under multiplication by 
$e\sp{i\theta}$, $\theta\in\R$.
Let us note that since $F(0)=0$ by (\ref{F1}), then for any $\omega\in\R$
there is a zero solitary wave with $\psi\sb\omega(x)\equiv 0$.
\begin{lemma}\label{sol-ex} (Existence of solitary waves).
Assume that $F(\zeta)$ satisfies (\ref{F1}). Then nonzero solitary waves may exist only 
for $\omega\in (-m,m)$. The amplitudes of solitary waves are given by 
\begin{equation}\label{psisol}
\psi_\omega(x)=q_\omega\frac{e^{-\sqrt{m^2-\omega^2}|x|}}{4\pi|x|}\in L^2(\R^3),\quad\omega\in (-m,m),
\end{equation}
where $q_\omega$ is the solution to 
\begin{equation}\label{qsol}
m-\sqrt{m^2-\omega^2}=4\pi b(|q_\omega|^2).
\end{equation}
\end{lemma}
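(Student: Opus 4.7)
The plan is to substitute the solitary ansatz~(\ref{sol1}) into the system~(\ref{iKG}), reduce it to a linear elliptic equation with a point source, identify for which $\omega$ there is a nontrivial $L^2$ solution, and finally extract the algebraic relation~(\ref{qsol}) from the nonlinear contact condition at the origin.

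First I would plug $\psi(x,t)=e^{-i\omega t}\psi_\omega(x)$ into~(\ref{iKG}). In accordance with the domain splitting~(\ref{q}), write $\psi_\omega=\psi_{\omega,reg}+q_\omega G$ with $q_\omega\in\C$; then the forcing term forces $\zeta(t)=e^{-i\omega t}q_\omega$, and the PDE collapses to the stationary problem $(-\Delta+m^2-\omega^2)\psi_\omega=q_\omega\,\delta(x)$ on $\R^3$.

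Next I would restrict the admissible range of $\omega$. For $|\omega|\ge m$ the fundamental solution of $-\Delta+m^2-\omega^2$ is either the Coulomb kernel $\frac{1}{4\pi|x|}$ (case $|\omega|=m$) or a combination of the oscillating kernels $\frac{e^{\pm i\sqrt{\omega^2-m^2}\,|x|}}{4\pi|x|}$ (case $|\omega|>m$), and none of these lies in $L^2(\R^3)$. Hence any nonzero solitary wave must satisfy $\omega\in(-m,m)$. On this interval, setting $\kappa:=\sqrt{m^2-\omega^2}>0$, the unique $L^2$ solution is $\psi_\omega=q_\omega G_\kappa$ with $G_\kappa(x)=\frac{e^{-\kappa|x|}}{4\pi|x|}$, which is exactly~(\ref{psisol}).

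Finally I would impose the nonlinear contact condition. The regular part is $\psi_{\omega,reg}=q_\omega(G_\kappa-G)$, which belongs to $H^2(\R^3)$: applying $(-\Delta+\kappa^2)$ yields $q_\omega\omega^2 G\in L^2(\R^3)$ and elliptic regularity gives the required $H^2$ smoothness. Using the expansions $e^{-\kappa|x|}=1-\kappa|x|+O(|x|^2)$ and $e^{-m|x|}=1-m|x|+O(|x|^2)$ the $\frac{1}{4\pi|x|}$ singularities cancel and I obtain $\psi_{\omega,reg}(0)=\frac{q_\omega(m-\kappa)}{4\pi}$. The boundary condition $\psi_{reg}(0)=F(q_\omega)=b(|q_\omega|^2)q_\omega$ from~(\ref{iKG}) and~(\ref{F1}) thus reduces to $q_\omega(m-\sqrt{m^2-\omega^2})=4\pi\,b(|q_\omega|^2)\,q_\omega$, and cancelling the nonzero factor $q_\omega$ gives precisely~(\ref{qsol}).

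The only delicate step is this bookkeeping at the origin: verifying that the $\frac{1}{4\pi|x|}$ singularities of $G_\kappa$ and $G$ cancel so that $\psi_{\omega,reg}$ is genuinely an $H^2$ function with a well-defined trace at $x=0$ equal to $\frac{q_\omega(m-\kappa)}{4\pi}$. Once this is checked, the rest of the argument is purely algebraic and leaves no room for nontrivial solutions outside $\omega\in(-m,m)$.
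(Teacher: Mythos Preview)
Your proposal is correct and follows essentially the same route as the paper: split $\psi_\omega=q_\omega G_\kappa=(q_\omega(G_\kappa-G))+q_\omega G$, check that the regular part lies in $H^2$, and read off~(\ref{qsol}) from the contact condition at $x=0$ via the Taylor expansion of the exponentials. If anything, you are more thorough than the paper, which simply writes down the decomposition and verifies the boundary condition without explicitly arguing why $|\omega|\ge m$ is excluded; your discussion of the non-$L^2$ fundamental solutions in that range fills this gap.
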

\begin{proof}
We can split $\psi(x,t)=\psi_{reg}(x,t)+\zeta(t)G(x)$, where
\[
\psi_{reg}(x,t)=q_\omega e^{-i\omega t}\frac{e^{-\sqrt{m^2-\omega^2}|x|}-e^{-m|x|}}{4\pi|x|},
\quad\quad\zeta(t)=q_\omega e^{-i\omega t}.
\]
Evidently, $\psi_{reg}(\cdot,t)\in H^2(\R^3)$, $~~\zeta(\cdot)\in C_b(\R)$. 
Finally, the second equation of (\ref{iKG}) together with  (\ref{F1}) give
\[
q_\omega e^{-i\omega t}\frac{m-\sqrt{m^2-\omega^2}}{4\pi}
=q_\omega e^{-i\omega t} b(|q_\omega|^{2}).
\]
\end{proof}
At last, we assume that the nonlinearity is polynomial.
This assumption is crucial in our argument since
it will allow us to apply the Titchmarsh convolution theorem.
Now all our assumptions on $F$ can be summarized as follows.

\begin{equation}\label{f-is-such}
{\bf Assumption ~A}\qquad\qquad F(\zeta)=\partial_{\overline\zeta} U(\zeta),
\qquad
U(\zeta)=\sum\limits\sb{n=0}\sp{N}u_n\abs{\zeta}\sp{2n},\quad u_n\in\R,\quad u_N>0,\quad \ N\ge 2.\qquad\qquad
\end{equation}

In particular, this assumption guarantees that the nonlinearity
$F$ satisfies the bound (\ref{bound-below})
from Theorem~\ref{theorem-well-posedness}.
Our main result is the following theorem.
\begin{theorem}[Main Theorem]
\label{main-theorem}
Let Assumption~(\ref{f-is-such}) be satisfied.
Then for any $(\psi\sb 0,\pi\sb 0)\in {\cal D}_F$
the solution $\Psi(t)=(\psi(t),\dot\psi(t))$
to  {\rm (\ref{KG})} 
with $(\psi,\dot\psi)\at{t=0}=(\psi\sb 0,\pi\sb 0)$
converges to solitary manifold ${\bf S}$ in the space ${\cal L}^2_{loc}$:
\begin{equation}\label{cal-A}
\lim\sb{t\to\pm\infty}
{\rm dist}_{{\cal L}^2_{loc}}(\Psi(t),{\bf S})=0, 
\end{equation}
where ${\rm dist}_{{\cal L}^2_{loc}}(\cdot,\cdot)$ defined in (\ref{mE}).
\end{theorem}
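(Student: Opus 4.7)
The plan is to follow the binary mechanism of energy radiation sketched in the introduction: dispersion carries away the high-frequency component of the field, while the Titchmarsh convolution theorem then collapses the residual low-frequency spectrum to a single point. Concretely, I would proceed in four steps.

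\textbf{Step 1: Dispersive--singular splitting.} Write $\psi(x,t)=\psi_d(x,t)+\psi_s(x,t)$, where $\psi_d$ is the free Klein-Gordon evolution with appropriately chosen initial data taken from the regular part of $\Psi_0$, and $\psi_s$ solves the inhomogeneous free Klein-Gordon equation with source $\zeta(t)\delta(x)$ and zero Cauchy data. Standard local energy decay for the 3D free Klein-Gordon equation gives $\norm{\Psi_d(t)}_{{\cal L}^2_R}\to 0$ as $|t|\to\infty$ for every $R>0$. The attractor problem thus reduces to understanding $\Psi_s(t)$, whose dynamics is encoded in the scalar coefficient $\zeta(t)\in C_b(\R)$ by the a priori bound (\ref{apb}).

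\textbf{Step 2: Spectral representation and absolute continuity.} Representing $\psi_s$ as a retarded convolution of the free Klein-Gordon propagator with $\zeta(t)\delta(x)$ produces a nonlinear delay/integro-differential equation for $\zeta$ (equation (\ref{delay})), in which the value of the dispersive component at $x=0$ is an inhomogeneous term that decays in $t$. Since $\zeta\in L^\infty(\R)$, its Fourier transform $\tilde\zeta$ is a tempered distribution. The crucial analytic input is a nonlinear version of Kato's theorem on absence of embedded eigenvalues, asserting that the restriction of $\tilde\zeta$ to $\R\setminus[-m,m]$ is absolutely continuous. This yields dispersion decay in local seminorms for the portion of $\psi_s$ corresponding to the continuous spectrum $|\om|>m$.

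\textbf{Step 3: Omega-limit trajectories.} Given any sequence $t_n\to+\infty$, equicontinuity (obtained from the delay equation and the bound on $\zeta$) together with Ascoli's theorem lets one extract, along a subsequence, a limit $\zeta(t+t_n)\to\eta(t)$ uniformly on compact sets, and the corresponding fields $\Psi_s(\cdot+t_n)$ converge in ${\cal L}^2_{loc}$ to an \emph{omega-limit trajectory} $\Psi_\eta$ that satisfies the original system (\ref{iKG}) with $\eta$ in place of $\zeta$. Combining the absolute continuity of Step~2 with the decay of the dispersive contribution forces $\eta$ to satisfy the \emph{homogeneous} integro-differential equation (\ref{zetalimeq}), so that $\supp\tilde\eta\subset[-m,m]$ as a quasimeasure.

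\textbf{Step 4: Titchmarsh reduction.} Here Assumption~A is essential. Because $F(\zeta)=b(|\zeta|^2)\zeta$ is polynomial, $F(\eta)$ is a finite sum of monomials $\eta^{n+1}\ov{\eta}^{n}$, and on the Fourier side this becomes iterated convolutions of $\tilde\eta$ with its reflection. The homogeneous equation translates into a spectral inclusion (\ref{wFs}) of the form $\supp\widetilde{F(\eta)}\subset\supp\tilde\eta$ modulo the gap. The Titchmarsh convolution theorem, applied in the quasimeasure setting, then forces $\supp\tilde\eta$ to be a single point $\{\om_+\}\subset[-m,m]$. Hence $\eta(t)=q_+e^{-i\om_+ t}$ and $\Psi_\eta\in{\bf S}$ by Lemma~\ref{sol-ex}. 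Since every subsequential limit lies in ${\bf S}$, we conclude ${\rm dist}_{{\cal L}^2_{loc}}(\Psi(t),{\bf S})\to 0$ as $t\to+\infty$, and symmetrically for $t\to-\infty$.

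\textbf{Main obstacle.} The hardest technical ingredient is the absolute continuity in Step~2, which underpins both the dispersive decay of the high-frequency part of $\psi_s$ and the passage from $\zeta$ to a homogeneous equation for $\eta$. The 3D concentrated source is more singular than the 1D situation of \cite{KK2007} and the mean-field situation of \cite{KK2010a}, so neither the derivation of the delay equation (\ref{delay}) nor the analytic continuation of $\tilde\zeta$ across $\pm m$ is routine. A secondary obstacle is setting up the quasimeasure calculus with enough care that the classical Titchmarsh theorem can be invoked in Step~4 despite $\tilde\eta$ being only a distribution.
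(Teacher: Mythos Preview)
Your proposal is correct and follows essentially the same four-step strategy as the paper (dispersive decay of the free component, absolute continuity of $\tilde\zeta$ outside the spectral gap, compactness and extraction of omega-limit trajectories satisfying the homogeneous equation, Titchmarsh reduction to a single frequency). One small correction to Step~1: in the paper's splitting the dispersive component $\psi_f$ carries the \emph{full} initial data $(\psi_0,\pi_0)$, not just the regular part, so its local decay is not entirely ``standard'' and requires a separate argument for the piece generated by the singular data $(\zeta_0 G,\dot\zeta_0 G)$ (Lemma~\ref{lemma-decay-psi3}); with that adjustment $\psi_S$ indeed has zero Cauchy data as you assert.
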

It suffices to prove Theorem~\ref{main-theorem} for $t\to+\infty$.
We will only consider the solution $\psi(x,t)$ restricted to $t\ge 0$.

\section{Dispersive  component}
\label{sect-splitting}
Let $J_{1}$ be the Bessel function of order 1, and $\theta$ be the Heaviside function.
In \cite{K15} we proved that the solution $\psi(x,t)$ to (\ref{KG}) with initial data $\psi_0=\psi_{0,reg}+\zeta_0 G\in D_F$, 
$\pi_0=\pi_{0,reg}+\dot\zeta_0 G\in D$ is  given by
\begin{equation}\label{sol_sum}
\psi(x,t):= \psi_f(x,t)+\frac{\theta(t-|x|)}{4\pi|x|}\zeta(t-|x|)-\frac{m}{4\pi}
\int_0^t\frac{\theta(s-|x|)J_1(m\sqrt{s^2-|x|^2})}{\sqrt{s^2-|x|^2}}\zeta(t-s)ds, \quad t\ge 0.
\end{equation}
Here $\psi_f(x,t)\in C([0,\infty),L^2(\R^3))$ is a unique solution to the Cauchy problem for the free Klein-Gordon equation.
\begin{equation}\label{CP1}
\ddot{\psi}_f(x,t) = (\Delta-m^2)\psi_f(x,t),
\quad \psi_f(x,0) = \psi_0(x),\quad\dot\psi_f(x,0)  =  \pi_0(x),
\end{equation}
and $\zeta(t)\in C^1([0,\infty))$  is a unique solution to the Cauchy problem 
for the following first-order nonlinear integro-differential equation with delay
\begin{equation}\label{delay}
\frac {\dot\zeta(t)}{4\pi}-\frac {m}{4\pi}\zeta(t)+\frac {m}{4\pi}
\int_0^t\frac{J_1(ms)}{s}\zeta(t-s)ds+F(\zeta(t))=\lambda(t),\quad t\ge 0,
\quad \zeta(0)=\zeta_0,
\end{equation}
where $\lambda(t):=\lim\limits_{x\to 0}\psi_f(x,t)\in C([0,\infty))$.  
Note that the limit is well defined, $\lambda(t)$  is continuous for $t>0$, and it admits
a limit as $t\to +0$ (see  \cite{K15}).
The integral in (\ref{delay}) is bounded for all $t\ge 0$ due to well known properties 
of the Bessel function $J_1$: $J_1(r)\sim r^{-1/2}$ for  
$r\to\infty$, and $J_1(r)\sim r$ as $r\to 0$ (see for example \cite {O}).
Now we study the decay properties of the dispersive component  $\psi_f(x,t)$
for $t\to\infty$.
\begin{proposition}\label{decayM}
$\psi_{f}(x,t)$ decays in ${\cal X}_{loc}$ seminorms. That is,  $\forall R>0$
\begin{equation}\label{psi-f-decay}
\Norm{(\psi_{f}(t),\dot\psi_{f}(t)}_{{\cal X}_R}\to 0,\qquad t\to\infty.
\end{equation}
\end{proposition}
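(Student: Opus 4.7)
The plan is to split the initial data by linearity into its regular and singular parts and treat each separately. Write $\psi_f=\psi_f^{\mathrm{reg}}+\psi_f^{\mathrm{sing}}$, where $\psi_f^{\mathrm{reg}}$ is the free Klein--Gordon evolution of $(\psi_{0,reg},\pi_{0,reg})\in H^2\oplus H^1$ and $\psi_f^{\mathrm{sing}}$ is the evolution of $(\zeta_0 G,\dot\zeta_0 G)$. It then suffices to show ${\cal X}_R$-decay of each piece.

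For $\psi_f^{\mathrm{reg}}$ I rely on spectral theory. The operator $\omega:=\sqrt{-\Delta+m^2}$ has purely absolutely continuous spectrum $[m,\infty)$ on $L^2(\R^3)$, so Riemann--Lebesgue applied to the spectral measure yields $\cos(t\omega)f,\sin(t\omega)f\rightharpoonup 0$ in $L^2$ for every $f\in L^2$. Combining density (approximate $f$ in $L^2$ by $H^1$ data) with the compact embedding $H^1(B_R)\hookrightarrow L^2(B_R)$ (Rellich--Kondrachov) upgrades this to strong local decay $\|\chi\cos(t\omega)f\|_{L^2}\to 0$ for every $\chi\in C_c^\infty$. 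Applied to the $L^2$ combinations $\partial^\alpha\psi_{0,reg}$ $(|\alpha|\le 2)$, $\omega^{-1}\partial^\alpha\pi_{0,reg}$ $(|\alpha|\le 2)$, $\omega\partial^\alpha\psi_{0,reg}$ $(|\alpha|\le 1)$, and $\partial^\alpha\pi_{0,reg}$ $(|\alpha|\le 1)$ -- all of which lie in $L^2$ by the hypothesis $\psi_{0,reg}\in H^2$, $\pi_{0,reg}\in H^1$ -- this controls every relevant spatial derivative of $\psi_f^{\mathrm{reg}}$ and $\dot\psi_f^{\mathrm{reg}}$, giving $\|(\psi_f^{\mathrm{reg}}(t),\dot\psi_f^{\mathrm{reg}}(t))\|_{{\cal X}_R}\to 0$.

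For $\psi_f^{\mathrm{sing}}$ the argument above breaks down because $G\notin H^1(\R^3)$ (indeed $\nabla G\notin L^2$). I instead use the explicit Cauchy fundamental solution of the 3D free Klein-Gordon equation: for $t>0$,
\[
\sin(t\omega)\omega^{-1}\delta(x)=\frac{\delta(t-|x|)}{4\pi|x|}-\theta(t-|x|)\,\frac{mJ_1(m\sqrt{t^2-|x|^2})}{4\pi\sqrt{t^2-|x|^2}},
\]
with $\cos(t\omega)\delta$ obtained as its $t$-derivative. Since $(-\Delta+m^2)G=\delta$, both $\cos(t\omega)G$ and $\sin(t\omega)\omega^{-1}G$ follow from this kernel by a further application of $(-\Delta+m^2)^{-1}$, or directly from the radial Fourier representation with the extra factor $(|k|^2+m^2)^{-1}$. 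For any $R>0$ and $t>R$ the light-cone singularity $\{|x|=t\}$ lies outside $B_R$, so on $B_R$ only the smooth $J_1$-correction (and its smooth transforms) contributes. The classical asymptotic $J_1(z)=\sqrt{2/(\pi z)}\cos(z-3\pi/4)+O(z^{-3/2})$ then yields uniform $O(t^{-3/2})$ decay on $B_R$ for $\psi_f^{\mathrm{sing}}$ and all its space-time derivatives of the required orders, so $\|(\psi_f^{\mathrm{sing}}(t),\dot\psi_f^{\mathrm{sing}}(t))\|_{{\cal X}_R}=O(t^{-3/2})$.

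The main obstacle is exactly this second step: the abstract spectral-compactness machinery fails for the singular component at the $H^2_R$ level because the data carries a $1/|x|$ singularity at the origin. The resolution combines propagation of singularities (the non-smooth contribution is pinned to the expanding sphere $\{|x|=t\}$, which leaves $B_R$ once $t>R$) with Bessel asymptotics that make the smooth interior part small uniformly on every compact set.
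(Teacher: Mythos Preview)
Your proof is correct and follows essentially the same strategy as the paper: split $\psi_f$ into the evolution of the regular data $(\psi_{0,reg},\pi_{0,reg})\in{\cal X}$ and of the singular data $(\zeta_0 G,\dot\zeta_0 G)$, treat the first by abstract local dispersion for the free Klein--Gordon group, and treat the second via the explicit fundamental solution and Bessel asymptotics. Two minor differences in execution are worth noting. First, for the regular piece you give a self-contained proof (Riemann--Lebesgue on the absolutely continuous spectrum plus Rellich compactness), whereas the paper simply quotes this as a known lemma. Second, and more substantively, for the singular piece the paper inserts a cutoff $\eta\in C_c^\infty(B_1)$, writing $G=\eta G+(1-\eta)G$; the tail $(1-\eta)G$ lies in $H^2$ and falls under the regular-data lemma, while $\eta G$ is compactly supported in $B_1$, so the convolution with the propagator kernel runs only over $|y|\le 1$, and for $|x|\le R$, $t>2(R+1)$ one has $t\ge 2|x-y|$ uniformly, making the $O(t^{-3/2})$ Bessel bound immediate. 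Your approach of applying $(-\Delta+m^2)^{-1}$ directly to the fundamental solution works too, but it forces you to control the convolution $G\ast U(\cdot,t)$ over all of $\R^3$; the spherical-mean contribution from the light cone and the $J_1$ tail over $\{t/2<|y|<t\}$ then have to be killed by the exponential decay of $G$, a step you assert but do not write out. The cutoff buys you exactly this: it replaces the exponential-tail estimate by a trivial compact-support argument.
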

\begin{proof}
We split $\psi_f(x,t)$ as
\[
\psi_f(x,t)=\psi_{f,reg}(x,t)+\psi_{f,G}(x,t),\quad t\ge 0,
\]
where $\psi_{f,reg}$ and $\psi_{f,G}$  are defined
as solutions to the following Cauchy problems:
\begin{eqnarray}
&&
\ddot\psi\sb{f,reg}(x,t)=(\Delta-m^2)\psi\sb{f,reg}(x,t),
\qquad (\psi\sb{f,reg},\dot\psi\sb{f,reg})\at{t=0}=(\psi_{0,reg},\pi_{0,reg}),
\label{KG-cp-1}
\\
\nonumber
\\
&&
\ddot\psi\sb{f,G}(x,t)=(\Delta-m^2)\psi\sb{f,G}(x,t),
\qquad (\psi\sb{f,G},\dot\psi\sb{f,G})\at{t=0}=(\zeta_0G, \dot\zeta_0 G),
\label{KG-cp}
\end{eqnarray}
Since $(\psi_{0,reg},\pi_{0,reg})\in {\cal X}$, then  evidently,
\begin{equation}\label{psi-1-bounds}
(\psi_{f,reg},\dot\psi_{f,reg})\in C_{b}([0,\infty), {\cal X}).
\end{equation}
The following lemma states well known  decay in local seminorms for the free Klein-Gordon equation.
\begin{lemma}\label{lemma-decay-psi1} cf. \cite [Lemma 3.1]{KK2007})
Let $(u_0,v_0)\in {\cal X}$. Then $\forall R>0$
\begin{equation}\label{Uloc}
\Norm{{\cal U}(t)(u_0,v_0)}_{{\cal X}_R}\to 0,\qquad t\to\infty,
\end{equation}
where  ${\cal U}(t)$ is the dynamical group of the free Klein-Gordon equation.
\end{lemma}

Therefore, the first dispersive component  $\psi_{f,reg}(x,t)$ decays in ${\cal X}_{loc}$ seminorms. 
That is,  $\forall R>0$
\begin{equation}\label{Uloc1}
\Norm{(\psi_{f,reg}(\cdot,t),\dot\psi_{f,reg}(\cdot,t))}_{{\cal X}_R}\to 0,\qquad t\to\infty.
\end{equation}
Now we consider the second dispersive component $\psi_{f,G}$.
\begin{lemma}\label{lemma-decay-psi3}
$\psi_{f,G}(x,t)$ decays in ${\cal X}_{loc}$ seminorms. That is,  $\forall R>0$
\begin{equation}\label{psi2-decay}
\Norm{(\psi_{f,G}(t),\dot\psi_{f,G}(t)}_{{\cal X}_R}\to 0,\qquad t\to\infty.
\end{equation}
\end{lemma}
\begin{proof}
Let $\eta(x)$ be a smooth function with a support in $B_1$, such that
$\eta(x)=1$ for $x\in B_{1/2}$. We split G as
\[
G=\eta G+(1-\eta)G.
\]
 Lemma \ref{lemma-decay-psi1} implies that
\[
\Vert {\cal U}(t)\big(\zeta_0(1-\eta)G,\,\dot\zeta_0(1-\eta)G\big)\Vert_{{\cal X}_R}\to 0,
\qquad t\to\infty,\quad \forall R>0,
\]
since $\big(\zeta_0(1-\eta)G,\,\dot\zeta_0(1-\eta)G\big)\in {\cal X}$. Hence it suffices to prove that
\begin{equation}\label{u-decay}
\Norm{(u(t),\dot u(t))}_{{\cal X}_R}\to 0,\qquad t\to\infty,
\end{equation}
where
$(u(t),\dot u(t)):= {\cal U}(t)\big(\zeta_0\eta G,\,\dot\zeta_0\eta G\big)$.
The matrix kernel ${\cal U}(x-y,t)$ of the dynamical  group ${\cal U}(t)$
can be written as 
\begin{equation} \label{KGsol}
  {\cal U}(x-y,t)=\left( \begin{array}{ll}
  \dot U(x-y,t)    &              U(x-y,t)\\
  \ddot U(x-y,t)   &        \dot U(x-y,t)
  \end{array} \right),\quad x,y\in\R^3,\quad t>0,
\end{equation}
where
\begin{equation}\label{Gbess}
 U(z,t)=\frac{\delta (t-|z|)}{4\pi t}-
\frac{m}{4\pi}\frac{\theta(t-|z|)J_{1}
 (m\sqrt{t^{2}-|z|^{2}})}{\sqrt{t^{2}-|z|^{2}}},\quad z\in\R,\quad t>0.
\end{equation}
Well known asymptotics of the Bessel function imply that
\begin{equation}\label{W}
\Big|\partial_{t}^{k}\partial_{z}^{\beta}\frac{J_{1}
 (m\sqrt{t^{2}-|z|^{2}})}{\sqrt{t^{2}-|z|^{2}}}\Big|\le C(1+t)^{-3/2},\quad
t\ge 2|z|,\quad k=0,1,2,\quad |\beta|\le 2.
\end{equation}
Hence, for $|x|\le R$ and $t>2(R+1)$, we obtain
\[
|u(x,t)|+|\Delta u(x,t)|+|\dot u(x,t)|+|\nabla\dot u(x,t)|\le C t^{-3/2}.
\]
Then (\ref{u-decay}) follows.
\end{proof}
Finally, (\ref{Uloc}) and (\ref{psi2-decay}) imply (\ref{psi-f-decay}). 
\end{proof}
\begin{corollary}\label{cor-decay-psif}
From  (\ref{psi-f-decay}) immediately follows that
  \begin{equation}\label{psif-dec}
\lambda(t)=\psi_{f}(0,t)\to 0,\quad t \to\infty.
\end{equation}
\end{corollary}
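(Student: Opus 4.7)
The corollary is a short consequence of Proposition \ref{decayM} combined with Sobolev embedding. The plan is to dominate the pointwise value $\psi_f(0,t)$ by an $H^2$-seminorm on a small ball around the origin, then invoke the local $H^2$-decay already established in (\ref{psi-f-decay}).

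In three spatial dimensions the Sobolev embedding $H^2(B_R)\hookrightarrow C(\overline{B_R})$ holds (since $s=2>3/2$), so for any fixed $R>0$ (for concreteness $R=1$) one has
\[
|\psi_f(0,t)|\le\Norm{\psi_f(\cdot,t)}_{L^\infty(B_R)}\le C_R\Norm{\psi_f(\cdot,t)}_{H^2(B_R)}\le C_R\Norm{(\psi_f(t),\dot\psi_f(t))}_{{\cal X}_R}.
\]
By (\ref{psi-f-decay}) the right-hand side tends to zero as $t\to\infty$, which is precisely (\ref{psif-dec}).

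The one subtlety worth flagging is that the initial datum $\psi_0=\psi_{0,reg}+\zeta_0 G$ is not locally $H^2$ near the origin (because of the $1/|x|$ singularity of $G$), so one could in principle worry whether the restriction of $\psi_f(\cdot,t)$ to $B_R$ really lies in $H^2(B_R)$, which is what the Sobolev step requires. This is in fact already implicit in the proof of Proposition \ref{decayM}: by finite speed of propagation together with the explicit kernel representation (\ref{KGsol})--(\ref{Gbess}), the singular piece $\eta G$ (supported in $B_1$) contributes to $\psi_{f,G}(x,t)$ for $|x|\le R$ and $t>2(R+1)$ only through the smooth Bessel tail, which obeys the pointwise bound (\ref{W}); the complementary pieces $\psi_{f,reg}$ and the solution launched from $(1-\eta)G$ already live in $H^2(\R^3)$ globally. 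Hence $\psi_f(\cdot,t)\in H^2(B_R)$ for $t$ sufficiently large, the displayed chain of estimates is legitimate, and (\ref{psif-dec}) follows with no serious obstacle beyond this regularity check.
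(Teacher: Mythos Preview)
Your argument is correct and is exactly what the paper has in mind: the corollary is stated without proof, simply asserting that (\ref{psif-dec}) ``immediately follows'' from (\ref{psi-f-decay}), and the Sobolev embedding $H^2(B_R)\hookrightarrow C(\overline{B_R})$ is the obvious mechanism. Your remark on the subtlety (that $\psi_f(\cdot,t)\in H^2(B_R)$ only for $t$ large, because the singular initial piece $\zeta_0 G$ has been transported away from the origin and only the smooth Bessel tail remains) is a useful clarification already implicit in the proof of Proposition~\ref{decayM}.
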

In conclusion, let us show that
\begin{equation}\label{psi-2-bounds}
\psi_{f,G}(t)\in C_{b}([0,\infty), L^2(\R^3)).
\end{equation}
Indeed, the energy conservation for the free Klein-Gordon equation implies that
\[
{\cal U}(t)(0,G)=(U(t)G,\dot U(t)G)\in C_{b}([0,\infty), H^1(\R^3)\oplus L^2(\R^3)).
\]
Then
\[
\psi_{f,G}(t)=\zeta_0\dot U(t)G+\dot\zeta_0 U(t)G\in C_{b}([0,\infty), L^2(\R^3)).
\]
\section{Singular component}
\label{sect-bound}
\subsection*{Complex Fourier-Laplace transform}
In notation (\ref{sol_sum}) define the functions
\begin{equation}\label{pp-rep}
\psi_S(x,t):=\frac{\theta(t-|x|)}{4\pi|x|}\zeta(t-|x|)-\frac{m}{4\pi}
\int_0^t\frac{\theta(s-|x|)J_1(m\sqrt{s^2-|x|^2})}{\sqrt{s^2-|x|^2}}\zeta(t-s)ds\in C([0,\infty),L^2(\R^3)),
\quad t\ge 0.
\end{equation}
It is easy to verify that 
$\psi_S(x,t)$ is the solution to the Cauchy problem 
\begin{equation}\label{CP2}
\ddot\psi_S(x,t)= (\Delta-m^2)\psi_S(x,t) +\zeta(t)\delta(x),
\quad \psi_S(x,0) = 0,\quad\dot\psi_S(x,0)=0.
\end{equation}
The energy conservation (\ref{ec}) and a priory bound (\ref{apb})
imply that $\psi(t)\in C_{b}([0,\infty), L^2(\R^3))$.
Hence  (\ref{sol_sum}), (\ref{psi-1-bounds}) and (\ref{psi-2-bounds}) give that
\begin{equation}\label{psiS}
\psi_{S}(t)\in C_{b}([0,\infty), L^2(\R^3)).
\end{equation}
Let us analyze the  Fourier-Laplace transform of $\psi_{S}(x,t)$:
\begin{equation}\label{FL}
\tilde\psi_{S}(x,\omega)=\mathcal{F}_{t\to\omega}[\theta(t)\psi_{S}(x,t)]
:=\int_ 0^\infty e^{i\omega t}\psi_{S}(x,t)\,dt,\quad\omega\in\C^{+},\quad x\in\R^3,
\end{equation}
where $\C^{+}:=\{z\in\C:\;\Im z>0\}$. Note that
$\tilde\psi_{S}(\cdot,\omega)$ is an $L^2$-valued analytic function of $\omega\in\C^+$
due to (\ref{psiS}).
Equation (\ref{CP2})  implies that
\begin{equation}\label{FL-eq}
-\omega^2\tilde\psi_{S}(x,\omega)=(\Delta-m^2)\tilde\psi_S(x,\omega)
+\tilde\zeta(\omega)\delta(x),\quad \omega\in\C^{+},\quad x\in\R\sp 3,
\end{equation}
where $\tilde\zeta(\omega)$ is the Fourier-Laplace transform of $\zeta(t)$:
\begin{equation}
\tilde\zeta(\omega)={\cal F}_{t\to\omega}[\theta(t)\zeta(t)]=
\int_{0}^\infty e^{i\omega t}\zeta(t)\,dt.
\end{equation}

Applying the Fourier transform  to (\ref{FL-eq}), we get 
\begin{equation}\label{hat-tilde}
\hat{\tilde\psi}_{S}(\xi,\omega)
=\frac{\tilde\zeta(\omega)}{\xi^2+m^2-\omega^2},\qquad \xi\in\R^3,\qquad\omega\in\C^{+}.
\end{equation}
Denote 
\begin{equation}\label{def-k}
\varkappa(\omega)=\sqrt{\omega^2-m^2},
\qquad\Im \varkappa(\omega)>0,\qquad\omega\in\C^{+}.
\end{equation}
Then $\varkappa(\omega)$ is the analytic function on $\C\sp{+}$, and  $\tilde\psi_{S}(x,\omega)$
is given by
\begin{equation}\label{psi-s}
\tilde\psi_{S}(x,\omega)=\tilde\zeta(\omega)V(x,\omega), \quad V(x,\omega)=
\frac{e^{i\varkappa(\omega)|x|}}{4\pi|x|},\quad\quad\omega\in\C^{+}.
\end{equation}
We then have, formally, for any $\varepsilon>0$:
\begin{equation}
\psi_{S}(x,t)=\frac{1}{2\pi}\int_{\Im\omega=\varepsilon}e^{-i\omega t}\tilde\zeta(\omega)
V(x,\omega)\,d\omega
=\frac{1}{2\pi}\int_\R e^{-i\omega t}\tilde\zeta(\omega+i0)V(x,\omega+i0)\,d\omega
=\mathcal{F}\sb{\omega\to t}\sp{-1}
\big[\tilde\zeta(\omega)V(x,\omega)\big].
\end{equation}
\subsection*{Traces on the real line}
By (\ref{psiS}) the Fourier transform 
$\tilde\psi_{S}(\cdot,\omega)=\mathcal{F}_{t\to\omega}[\theta(t)\psi_{S}(\cdot,t)]$
is a tempered $L^2$-valued distribution of $\omega\in\R$.
It is the boundary value of the analytic function (\ref{FL})
 in the following sense:
\begin{equation}\label{bvp1}
\tilde\psi\sb{S}(\cdot,\omega)
=\lim\limits\sb{\varepsilon\to 0+}\tilde\psi\sb{S}(\cdot,\omega+i\varepsilon),\qquad\omega\in\R,
\end{equation}
where the convergence holds in $\mathscr{S}'(\R,L^2(\R^3))$.
Indeed,
\[
\tilde\psi_{S}(\cdot,\omega+i\varepsilon)
=\mathcal{F}_{t\to\omega}[\theta(t)\psi_{S}(\cdot,t)e\sp{-\varepsilon t}],
\]
while $\theta(t)\psi_{S}(\cdot,t)e^{-\varepsilon t}
\mathop{\longrightarrow}\limits_{\varepsilon\to 0+}\theta(t)\psi_{S}(\cdot,t)$
 in $\mathscr{S}'(\R,L^2(\R^3))$.
Therefore, (\ref{bvp1}) holds by the continuity of the Fourier transform
$\mathcal{F}_{t\to\omega}$ in $\mathscr{S}'(\R)$.

Similarly to (\ref{bvp1}), the distribution  $\tilde\zeta(\omega)$, $\omega\in\R$,
is the boundary values of the analytic in $\C^{+}$
function $\tilde\zeta(\omega)$, $\omega\in\C^{+}$:
\begin{equation}\label{bv}
\tilde\zeta(\omega)=\lim\limits_{\varepsilon\to 0+}
\tilde\zeta(\omega+i\varepsilon), \quad \omega\in\R,
\end{equation}
since the function $\theta(t)\zeta(t)$ is bounded.
The convergence holds in the space of tempered distributions $\mathscr{S}'(\R)$.

Let us justify that the representation (\ref{psi-s}) for $\tilde\psi_{S}(x,\omega)$
is also valid when $\omega\in\R\setminus\{-m;m\}$,  if the multiplication in (\ref{psi-s})
is understood in the sense of distribution. Namely,
\begin{lemma}\label{prop-uniform}
$V(x,\omega)$ is a smooth function of $\omega\in\R\setminus\{-m;m\}$ 
for any fixed $x\in\R^3\setminus \{0\}$, and the identity
\begin{equation}\label{p1r}
\tilde\psi_{S}(x,\omega)=\tilde\zeta(\omega)V(x,\omega),\quad\omega\in\R\setminus\{-m;m\}
\end{equation}
holds in the sense of distributions.
\end{lemma}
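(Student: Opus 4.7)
The plan has three stages. \emph{First}, I would verify smoothness of $V(x,\omega)$ in $\omega$ on $\R\setminus\{\pm m\}$ for any fixed $x\neq 0$. By (\ref{def-k}) and the branch choice $\Im\varkappa>0$ on $\C^+$, the boundary values from $\C^+$ read $\varkappa(\omega)=i\sqrt{m^2-\omega^2}$ on $(-m,m)$ and $\varkappa(\omega)=\sgn(\omega)\sqrt{\omega^2-m^2}$ on $|\omega|>m$; both are $C^\infty$ in $\omega$ on $\R\setminus\{\pm m\}$, so $V(x,\omega)=e^{i\varkappa(\omega)|x|}/(4\pi|x|)$ is as well for $x\neq 0$. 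In particular, for $|\omega|<m$ one gets the exponentially decaying form $e^{-\sqrt{m^2-\omega^2}|x|}/(4\pi|x|)$.

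\emph{Next}, I would obtain (\ref{p1r}) by taking distributional boundary values of (\ref{psi-s}) as $\Im\omega\to 0^+$. For every $\varepsilon>0$ the identity $\tilde\psi_{S}(x,\omega+i\varepsilon)=\tilde\zeta(\omega+i\varepsilon)V(x,\omega+i\varepsilon)$ holds pointwise in $\omega\in\R$. Fix $x\neq 0$ and a test function $\phi\in C_c^\infty(\R\setminus\{\pm m\})$, and pair both sides with $\phi$:
\begin{equation*}
\int_\R\tilde\psi_{S}(x,\omega+i\varepsilon)\phi(\omega)\,d\omega=\langle\tilde\zeta(\cdot+i\varepsilon),\,V(x,\cdot+i\varepsilon)\phi\rangle.
\end{equation*}
As $\varepsilon\to 0^+$, the left-hand side tends to $\langle\tilde\psi_{S}(x,\cdot),\phi\rangle$ by (\ref{bvp1}). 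On the right-hand side, the smoothness analysis of the first step, applied uniformly in $\varepsilon\ge 0$ on a compact neighborhood of the support of $\phi$, yields $V(x,\cdot+i\varepsilon)\phi\to V(x,\cdot)\phi$ in $C_c^\infty(\R)$, while $\tilde\zeta(\cdot+i\varepsilon)\to\tilde\zeta$ in $\mathscr{S}'(\R)$ by (\ref{bv}). Continuity of the distributional pairing then delivers (\ref{p1r}) in $\mathscr{D}'(\R\setminus\{\pm m\})$, understood as multiplication of the tempered distribution $\tilde\zeta(\omega)$ by the smooth function $V(x,\omega)$.

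\emph{The main obstacle}, and the reason for excluding $\omega=\pm m$ from the statement, is the square-root branch point of $\varkappa(\omega)$ at the band edges: the smooth factor we wish to multiply the boundary distribution $\tilde\zeta$ against fails to be smooth across $\pm m$. Confining $\phi$ to $C_c^\infty(\R\setminus\{\pm m\})$ sidesteps this issue and reduces the passage to the limit to the standard fact that multiplication is separately continuous from $\mathscr{S}'(\R)\times C^\infty(\R)$ (with locally uniform $C^\infty$-convergence near the support of $\phi$) into $\mathscr{D}'(\R\setminus\{\pm m\})$; a finer statement valid across $\pm m$ would require a careful threshold analysis that is not needed here.
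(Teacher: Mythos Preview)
Your proposal is correct and follows exactly the route indicated in the paper: the paper's own proof is a single sentence invoking (\ref{bvp1}), (\ref{bv}), and the smoothness of $V(x,\omega)$ away from $\pm m$, and you have simply unpacked that sentence into its natural three-step argument. The only minor remark is that in your second step you let both factors vary with $\varepsilon$ simultaneously, so the passage to the limit uses joint rather than separate continuity; this is justified by a Banach--Steinhaus argument (the family $\{\tilde\zeta(\cdot+i\varepsilon)\}$ is equicontinuous on $\mathscr{S}(\R)$), which you may wish to mention explicitly.
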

\begin{proof}
This lemma follows from (\ref{bvp1}) and (\ref{bv}) by the smoothness of $V(x,\omega)$ 
for $\omega\not =\pm m$.
\end{proof}
\subsection*{Absolutely continuous spectrum}
Note that $\R\setminus(-m,m)$ coincides with the continuous spectrum of the free Klein-Gordon 
equation, and the function $\omega \varkappa(\omega)$ is positive for $\omega\in\R\setminus[-m,m]$.
\begin{proposition}\label{tilde-f-plus-bounded} (cf. \cite[Proposition 2.3]{KK2009}
The distribution $\tilde \zeta(\omega+i0)$ is absolutely continuous for $\abs{\omega}>m$
and satisfies
\begin{equation}\label{rho-f-bound-we-have}
\int\sb{\abs{\omega}>m}\abs{\tilde \zeta(\omega)}^2\mathscr{M}(\omega)\,d\omega<\infty,
\quad{\rm where}~~~\mathscr{M}(\omega)=\frac{\varkappa(\omega)}{\omega}.
\end{equation}
\end{proposition}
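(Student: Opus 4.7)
The plan is to apply Parseval's identity in time to $\theta(t)e^{-\varepsilon t}\psi_S(\cdot,t)$, exploit the factorization $\tilde\psi_S(\cdot,\omega)=\tilde\zeta(\omega)V(\cdot,\omega)$ from (\ref{psi-s}), and then squeeze out the correct weight by letting $\varepsilon\to 0^+$. Concretely, by (\ref{psiS}) we have $C:=\sup_{t\ge 0}\|\psi_S(\cdot,t)\|^2<\infty$, so $\theta(t)e^{-\varepsilon t}\psi_S(\cdot,t)\in L^2(\R_t;L^2(\R^3))$ and
\[
\int_0^\infty e^{-2\varepsilon t}\|\psi_S(\cdot,t)\|^2\,dt\;\le\;\frac{C}{2\varepsilon},\qquad \varepsilon>0.
\]
Plancherel in time then gives
\[
\frac{1}{2\pi}\int_\R\|\tilde\psi_S(\cdot,\omega+i\varepsilon)\|^2\,d\omega
=\int_0^\infty e^{-2\varepsilon t}\|\psi_S(\cdot,t)\|^2\,dt\;\le\;\frac{C}{2\varepsilon}.
\]

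Next I would compute the $L^2(\R^3)$-norm of $V(\cdot,\omega+i\varepsilon)$ explicitly. Since $V$ is a Yukawa-type potential with exponential decay rate $\Im\varkappa(\omega+i\varepsilon)>0$, passing to spherical coordinates yields the elementary identity
\[
\|V(\cdot,\omega+i\varepsilon)\|^2_{L^2(\R^3)}
=\int_{\R^3}\frac{e^{-2\Im\varkappa(\omega+i\varepsilon)|x|}}{16\pi^2|x|^2}\,dx
=\frac{1}{8\pi\,\Im\varkappa(\omega+i\varepsilon)}.
\]
Combined with (\ref{psi-s}), this converts the Parseval inequality into
\[
\int_\R |\tilde\zeta(\omega+i\varepsilon)|^2\,\frac{d\omega}{\Im\varkappa(\omega+i\varepsilon)}
\;\le\;\frac{8\pi^2 C}{\varepsilon}.
\]

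The key asymptotic is then the behaviour of $\Im\varkappa(\omega+i\varepsilon)$ on $\{|\omega|>m\}$ as $\varepsilon\to 0^+$. From $\varkappa^2(\omega+i\varepsilon)=\omega^2-m^2+2i\omega\varepsilon+O(\varepsilon^2)$ and the convention $\Im\varkappa>0$ in $\C^+$, one checks that for real $\omega$ with $|\omega|>m$,
\[
\Im\varkappa(\omega+i\varepsilon)=\frac{|\omega|\,\varepsilon}{|\varkappa(\omega)|}\bigl(1+o(1)\bigr),
\qquad \varepsilon\to 0^+,
\]
uniformly on compact subsets of $\{|\omega|>m\}$. Inserting this into the previous display and using that $\omega\varkappa(\omega)>0$ for $|\omega|>m$ (since, on the real axis, $\varkappa(\omega)=\mathrm{sgn}(\omega)\sqrt{\omega^2-m^2}$), the factor $1/\varepsilon$ cancels and I obtain a uniform bound
\[
\int_{|\omega|>m}|\tilde\zeta(\omega+i\varepsilon)|^2\,\mathscr{M}(\omega)\,d\omega\;\le\;C',
\qquad 0<\varepsilon<\varepsilon_0.
\]

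Finally, this uniform bound shows that $\{\tilde\zeta(\cdot+i\varepsilon)\at{|\omega|>m}\}$ is bounded in the weighted space $L^2(\{|\omega|>m\},\mathscr{M}\,d\omega)$, so a subsequence converges weakly there. By (\ref{bv}) the distributional boundary value $\tilde\zeta(\omega+i0)$ must coincide with this weak limit on $\{|\omega|>m\}$; hence it is absolutely continuous there, and (\ref{rho-f-bound-we-have}) follows by weak lower semicontinuity of the norm. The main obstacle is the asymptotic analysis of $\Im\varkappa(\omega+i\varepsilon)$ together with the matching of the Parseval singularity $1/\varepsilon$ to the weight $\mathscr{M}(\omega)=\varkappa(\omega)/\omega$; the endpoints $|\omega|=m$ are handled simply by restricting to compact subsets of $\{|\omega|>m\}$ and invoking Fatou's lemma.
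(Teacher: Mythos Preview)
Your argument is correct and follows essentially the same strategy as the paper: Parseval yields the $1/\varepsilon$ bound, the explicit computation $\|V(\cdot,\omega+i\varepsilon)\|^2=(8\pi\,\Im\varkappa(\omega+i\varepsilon))^{-1}$ together with the estimate $\Im\varkappa(\omega+i\varepsilon)\le 2\varepsilon\omega/\varkappa(\omega)$ on $I_\delta=\{|\omega|\ge m+\delta\}$ cancels the $\varepsilon$, and weak compactness in $L^2(I_\delta,\mathscr{M}\,d\omega)$ combined with the distributional convergence (\ref{bv}) identifies the boundary trace. The paper packages the asymptotic step as a separate lemma with this explicit inequality valid for $0<\varepsilon\le\delta/n$ (rather than your $o(1)$ ``on compact subsets'', which should really read ``on $I_\delta$'' since those sets are unbounded), making the $\delta$-independence of the final constant transparent; but the substance is identical.
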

\begin{remark}
Recall that $\tilde \zeta(\omega)$, $\omega\in\R$, is defined by (\ref{bv})
as the trace distribution: $\tilde \zeta(\omega)=\tilde \zeta(\omega+i0)$.
\end{remark}
\begin{proof}
For any $\delta>0$ denote $I_\delta=(-\infty,-m-\delta]\cup [m+\delta,\infty)$.
It suffices to prove that 
\begin{equation}\label{rho-f-bound-we-have-1}
\int_{I_\delta}\abs{\tilde \zeta(\omega)}^2\mathscr{M}(\omega)\,d\omega\le C,
\end{equation}
with some constant $C>0$ which does not depend on $\delta$.
First, the Parseval identity applied to
\[
\tilde\psi_S(x,\omega+i\epsilon)=\int_{0}^\infty\psi_S(x,t)e^{i\omega t-\epsilon t}\,dt,
\quad\epsilon>0,
\]
gives
\[
\int\sb{\R}\norm{\tilde\psi_S(\cdot,\omega+i\epsilon)}_{L^2(\R^3)}^2\,d\omega
=2\pi\int\sb{0}\sp\infty\norm{\psi_S(\cdot,t)}_{L^2(\R^3)}^2\, e^{-2\epsilon t}\,dt.
\]
Since $\sup\sb{t\ge 0}\norm{\psi_S(\cdot,t)}_{L^2(\R)}<\infty$
by (\ref{psiS}), we may bound the right-hand side
by $C\sb 1/\epsilon$, with some $C\sb 1>0$. Taking into account (\ref{psi-s}),
we arrive at the key inequality
\begin{equation}\label{e-plus-bounds}
\int\sb{\R}
|\tilde \zeta(\omega+i\epsilon)|^2\norm{V(\cdot,\omega+i\epsilon)}_{L^2(\R^3)}^2\,d\omega
\le\frac{C\sb 1}{\epsilon}.
\end{equation}
\begin{lemma}\label{lemma-h1-bounds}
There exists $n\in\N$ such that
for any $\delta>0$ and $0<\epsilon\le\delta/n$
\begin{equation}\label{n-half}
\norm{V(\cdot,\omega+i\epsilon)}_{L^2(\R^3)}^2\ge\frac{{\cal M}(\omega)}{16\pi\epsilon},
\qquad \omega\in I_\delta.
\end{equation}
\end{lemma}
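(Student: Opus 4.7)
The plan is to compute $\norm{V(\cdot,\omega+i\epsilon)}_{L^2(\R^3)}^2$ in closed form and thereby reduce the target bound (\ref{n-half}) to a pointwise upper estimate on $\Im \varkappa(\omega+i\epsilon)$. Writing $\varkappa(\omega+i\epsilon)=a+ib$ with $b>0$ (by the branch convention in (\ref{def-k})), we have $|e^{i\varkappa(\omega+i\epsilon)|x|}|^2=e^{-2b|x|}$, and passing to spherical coordinates gives
\[
\Norm{V(\cdot,\omega+i\epsilon)}_{L^2(\R^3)}^2
=\frac{1}{(4\pi)^2}\int_{\R^3}\frac{e^{-2b|x|}}{|x|^2}\,dx
=\frac{1}{4\pi}\int_0^\infty e^{-2br}\,dr
=\frac{1}{8\pi b}.
\]
Consequently the inequality (\ref{n-half}) is equivalent to
\[
\Im\varkappa(\omega+i\epsilon)\le \frac{2\epsilon}{\mathscr{M}(\omega)}=\frac{2|\omega|\epsilon}{\sqrt{\omega^2-m^2}},\qquad \omega\in I_\delta.
\]

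Next I would derive $b^2$ explicitly. Squaring $\varkappa(\omega+i\epsilon)=a+ib$ and matching real and imaginary parts against $(\omega+i\epsilon)^2-m^2=(\omega^2-m^2-\epsilon^2)+2i\omega\epsilon$ produces
\[
a^2-b^2=\omega^2-m^2-\epsilon^2=:C,\qquad ab=\omega\epsilon.
\]
Eliminating $a$ gives the biquadratic $b^4+Cb^2-\omega^2\epsilon^2=0$, whose positive root is
\[
b^2=\frac{-C+\sqrt{C^2+4\omega^2\epsilon^2}}{2}=\frac{2\omega^2\epsilon^2}{\sqrt{C^2+4\omega^2\epsilon^2}+C}
\]
after rationalization. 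Provided $C>0$, this immediately yields the clean bound $b^2\le \omega^2\epsilon^2/C$, i.e.\ $b\le |\omega|\epsilon/\sqrt{C}$.

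Finally, I would select $n$ so that both $C>0$ and the resulting estimate imply the desired one. I expect $n=2$ to suffice. For $\omega\in I_\delta$ we have $\omega^2-m^2\ge\delta(2m+\delta)$, while $\epsilon\le\delta/2$ forces $\epsilon^2\le\delta^2/4\le \tfrac14\delta(2m+\delta)\le\tfrac14(\omega^2-m^2)$. Hence $C\ge\tfrac34(\omega^2-m^2)>0$, and
\[
b\le\frac{|\omega|\epsilon}{\sqrt{C}}\le\frac{2|\omega|\epsilon}{\sqrt{\omega^2-m^2}}=\frac{2\epsilon}{\mathscr{M}(\omega)},
\]
which closes the argument. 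No serious obstacle is anticipated; the computation is elementary. The only delicate point is the branch issue for $\omega<-m$, where $a<0$ (since $ab=\omega\epsilon<0$), but $b^2$ itself depends only on $\omega^2$ through $C$ and $\omega^2\epsilon^2$, so the derived upper bound is insensitive to the sign of $\omega$.
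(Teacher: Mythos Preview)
Your proof is correct and follows essentially the same approach as the paper: compute $\Norm{V(\cdot,\omega+i\epsilon)}_{L^2(\R^3)}^2=\dfrac{1}{8\pi\,\Im\varkappa(\omega+i\epsilon)}$ and then bound $\Im\varkappa(\omega+i\epsilon)\le 2|\omega|\epsilon/\sqrt{\omega^2-m^2}$. The only difference is cosmetic---the paper factors $\varkappa(\omega+i\epsilon)=\varkappa(\omega)\sqrt{1+(2i\epsilon\omega-\epsilon^2)/\varkappa^2(\omega)}$ and invokes a Taylor-type estimate for ``sufficiently large $n$'', whereas you solve the biquadratic for $b=\Im\varkappa$ explicitly and obtain the concrete value $n=2$.
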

\begin{proof}
The explicit formula (\ref{psi-s}) for $V(x,\omega+i\epsilon)$ implies
\begin{equation}\label{VV}
\norm{V(\cdot,\omega+i\epsilon)}_{L^2(\R^3)}^2=\frac{1}{4\pi}\int_0^{\infty}
|e^{i\varkappa(\omega+i\epsilon)r}|^2 dr=\frac{1}{8\pi\,\Im \varkappa(\omega+i\epsilon)}.
\end{equation}
Further, for $\omega\in I_\delta$  and $0<\epsilon\le \delta/n$ with sufficiently large $n\in\N$, we have
\begin{equation}\label{Imk}
\Im \varkappa(\omega+i\epsilon)=\Im\sqrt{(\omega+i\epsilon)^2-m^2}
=\varkappa(\omega)\Im\sqrt{1+(2i\epsilon\omega-\epsilon^2)/\varkappa^2(\omega)}
\le \frac{2\epsilon\omega}{\varkappa(\omega)}.
\end{equation}
Finally, (\ref{VV}) and (\ref{Imk}) imply  (\ref{n-half}). 
\end{proof}
Substituting (\ref{n-half}) into (\ref{e-plus-bounds}), we obtain the bound
\begin{equation}\label{fin}
\int_{I_\delta}\abs{\tilde\zeta(\omega+i\epsilon)}^2\mathscr{M}(\omega)\,d\omega
\le 16\pi C_1,\qquad 0<\epsilon\le\delta/n.
\end{equation}
We conclude that the set of functions
$g\sb{\delta,\epsilon}(\omega)=\tilde \zeta(\omega+i\epsilon)\sqrt{\mathscr{M}(\omega)},$
$0<\epsilon\le\epsilon(\delta)$ defined for $\omega\in I_\delta$,
is bounded in the Hilbert space $L^2(I_\delta)$, and, by the Banach Theorem, is weakly compact.
The convergence of the distributions (\ref{bv}) implies the weak convergence
$g_{\delta,\epsilon}\mathop{-\!\!\!\!-\!\!\!\!\rightharpoonup}\limits_{\epsilon\to 0+}g_{\delta}$
in the Hilbert space $L^2(I_\delta)$. The limit function $g_{\delta}(\omega)$ 
coincides with the distribution $\tilde\zeta(\omega)\sqrt{\mathscr{M}(\omega)}$
restricted onto $I_\delta$.
This proves the bound (\ref{rho-f-bound-we-have-1})
and finishes the proof of the proposition.
\end{proof}
\section{Compactness}
\label{sect-comp}
We are going to prove compactness of the set of translations of $\{\psi_S(x,t+s):~~s\ge 0\}$.
We start from the following lemma
\begin{lemma}\label{PC2}
For  any sequence $s_{j}\to\infty$  there exists an infinite subsequence 
(which we also denote by $s_{j}$) such that
\begin{equation}\label{zetalim}
\zeta(t+s_{j})\to\eta(t), \quad j\to\infty,\quad t\in\R
\end{equation}
for some  $\eta\in C_b(\R)$.
The  convergence is uniform on $[-T,T]$ for any $T>0$.
Moreover, $\eta(t)$ is the solution to
\begin{equation}\label{zetalimeq}
\frac {1}{4\pi}\dot\eta(t)-\frac {m}{4\pi}\eta(t)+\frac {m}{4\pi}
\int_0^\infty\frac{J_1(ms)}{s}\eta(t-s)ds+F(\eta(t))=0,\quad t\in\R.
\end{equation}
\end{lemma}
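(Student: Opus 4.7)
The plan is to extract a convergent subsequence from the translates $\zeta(\cdot+s_j)$ by an Arzel\`a--Ascoli argument and then pass to the limit in the delay equation (\ref{delay}) evaluated at time $t+s_j$.

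First I would establish a uniform $C^1$-bound. The a priori bound (\ref{apb}) gives $|\zeta(t)|\le C$ for all $t\ge 0$. Solving (\ref{delay}) for $\dot\zeta(t)$ yields
\[
\dot\zeta(t)=m\zeta(t)-m\int_0^t\frac{J_1(ms)}{s}\zeta(t-s)\,ds-4\pi F(\zeta(t))+4\pi\lambda(t),
\]
and every term on the right is uniformly bounded in $t\ge 0$: the convolution is controlled by $\|\zeta\|_\infty\cdot\|J_1(m\cdot)/\cdot\|_{L^1(0,\infty)}<\infty$ (the kernel is bounded near $0$ and is $O(s^{-3/2})$ at infinity by the Bessel asymptotics recalled after (\ref{delay})); $F(\zeta)$ is bounded by continuity of $F$ and boundedness of $\zeta$; and $\lambda\in C_b([0,\infty))$ by the arguments in Section~\ref{sect-splitting}. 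Hence $|\dot\zeta(t)|\le C'$, so the family $\{\zeta(\cdot+s_j)\}$, viewed on the intervals $[-s_j,\infty)$, is uniformly bounded and equi-Lipschitz. A standard Arzel\`a--Ascoli diagonal argument over $[-N,N]$ produces a subsequence (still labelled $s_j$) with $\zeta(\cdot+s_j)\to\eta(\cdot)$ uniformly on every compact of $\R$, for some $\eta\in C_b(\R)$ with $\|\eta\|_\infty\le C$.

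Next I would pass to the limit in (\ref{delay}) at time $t+s_j$. Uniform convergence on compacts and continuity of $F$ handle the $\zeta(t+s_j)$ and $F(\zeta(t+s_j))$ terms, while Corollary~\ref{cor-decay-psif} gives $\lambda(t+s_j)\to 0$. The delicate point is the convolution. For any fixed $R>0$ and $j$ with $t+s_j>R$, I split
\[
\int_0^{t+s_j}\frac{J_1(ms)}{s}\zeta(t+s_j-s)\,ds=\int_0^R\frac{J_1(ms)}{s}\zeta(t+s_j-s)\,ds+\int_R^{t+s_j}\frac{J_1(ms)}{s}\zeta(t+s_j-s)\,ds.
\]
On $[0,R]$ the argument $(t-s)+s_j$ runs through the translate by $s_j$ of the bounded set $[t-R,t]$, on which $\zeta(\cdot+s_j)\to\eta(\cdot)$ uniformly, so the first piece tends to $\int_0^R\frac{J_1(ms)}{s}\eta(t-s)\,ds$. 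The tail is dominated uniformly in $j$ by $\|\zeta\|_\infty\int_R^\infty|J_1(ms)|\,s^{-1}\,ds$, which tends to $0$ as $R\to\infty$. Letting $R\to\infty$ after $j\to\infty$ yields convergence to $\int_0^\infty\frac{J_1(ms)}{s}\eta(t-s)\,ds$, absolutely convergent by the same integrability.

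Since every other term in (\ref{delay}) at $t+s_j$ now has a pointwise limit, $\dot\zeta(t+s_j)$ must converge pointwise to a function $\chi(t)$, which is continuous in $t$ by continuity of $\eta$ and dominated convergence applied to the convolution. Integrating $\zeta(t+s_j)-\zeta(s_j)=\int_0^t\dot\zeta(\tau+s_j)\,d\tau$ and using dominated convergence with the uniform bound $|\dot\zeta|\le C'$ gives $\eta(t)-\eta(0)=\int_0^t\chi(\tau)\,d\tau$, so $\eta\in C^1(\R)$ with $\dot\eta=\chi$, which is exactly (\ref{zetalimeq}). The main obstacle is the convolution step: for $s$ of order $s_j$ the integrand probes values of $\zeta$ near the initial time that are not captured by $\eta$, and this is resolved precisely by the $L^1$-integrability of $J_1(ms)/s$, which renders the large-$s$ tail uniformly negligible.
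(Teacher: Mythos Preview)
Your proof is correct and follows the same route as the paper: a $C^1_b$ bound on $\zeta$ obtained from the a~priori estimate (\ref{apb}) together with the delay equation (\ref{delay}) and the integrability of $J_1(ms)/s$, then Arzel\`a--Ascoli, then passage to the limit in (\ref{delay}) using $\lambda(t)\to 0$ and the $L^1$-bound on the Bessel kernel. Your treatment is in fact more detailed than the paper's, which invokes the Lebesgue dominated convergence theorem directly for the convolution and does not spell out the $C^1$-regularity of $\eta$; your $[0,R]$ versus tail split and the step identifying $\dot\eta$ via integration make these points explicit.
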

\begin{proof}
Theorem \ref{theorem-well-posedness}-iv), Corollary \ref{cor-decay-psif}
and equation (\ref{delay}) imply that $\zeta\in C^1_b(\R)$. 
Then (\ref{zetalim}) follows from the Arzel\'a-Ascoli theorem.
Further, for any $t\in\R$  we get
\begin{equation}\label{Intlim}
\int_0^{t+s_j}\frac{J_1(ms)}{s}\zeta(t+s_j-s)ds\to\int_{0}^{\infty}\frac{J_1(ms)}{s}\eta(t-s)ds,\quad j\to\infty 
\end{equation}
by the Lebesgue dominated convergence theorem.
Then   equation (\ref{delay}) for $\zeta(t)$ together with (\ref{psif-dec}) and (\ref{Intlim}) imply
(\ref{zetalimeq}).
\end{proof}
 Lemma \ref{PC2}  imply 
\begin{lemma}\label{PC}
The  convergences hold 
\begin{equation}\label{ePC}
\psi_S(\cdot,t+s_{j})\to\beta_S(\cdot,t)
:=\frac{\eta(t-|x|)}{4\pi|x|}-\frac{m}{4\pi}
\int_0^\infty\frac{\theta(s-|x|)J_1(m\sqrt{s^2-|x|^2})}{\sqrt{s^2-|x|^2}}\eta(t-s)ds, 
\quad j\to\infty,\quad t\in\R,
\end{equation}
\begin{equation}\label{ePC1}
\dot\psi_S(\cdot,t+s_{j})\to\dot\beta_S(\cdot,t)
=\frac{\dot\eta(t-|x|)}{4\pi|x|}-\frac{m}{4\pi}
\int_0^\infty\frac{\theta(s-|x|)J_1(m\sqrt{s^2-|x|^2})}{\sqrt{s^2-|x|^2}}\dot\eta(t-s)ds, 
\quad j\to\infty,\quad t\in\R,
\end{equation}
in the topology of $C_b([-T,T],L^2_{loc})$ for any $T>0$. 
\end{lemma}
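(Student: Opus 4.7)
The plan is to pass to the limit $j\to\infty$ directly in the representation (\ref{pp-rep}), translated in time by $s_j$. For $t\in[-T,T]$, $x\in B_R$, and $s_j>T+R$ the Heaviside factor in front of the retarded term equals one, so
\[
\psi_S(x,t+s_j)=\frac{\zeta(t+s_j-|x|)}{4\pi|x|}-\frac{m}{4\pi}\int_0^{t+s_j}\frac{\theta(s-|x|)J_1(m\sqrt{s^2-|x|^2})}{\sqrt{s^2-|x|^2}}\,\zeta(t+s_j-s)\,ds,
\]
while $\beta_S(x,t)$ has the same form with $\eta$ in place of $\zeta$ and with upper limit $+\infty$. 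I will treat the two terms separately.

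For the retarded singular term, Lemma~\ref{PC2} supplies uniform convergence $\zeta(t+s_j-|x|)\to\eta(t-|x|)$ for $(t,|x|)\in[-T,T]\times[0,R]$; together with $|x|^{-1}\in L^2(B_R)$ and $\sup_j\|\zeta\|_{C_b(\R)}<\infty$ (a consequence of (\ref{apb})), this yields the convergence of $\zeta(t+s_j-|x|)/(4\pi|x|)$ to $\eta(t-|x|)/(4\pi|x|)$ in $C([-T,T],L^2(B_R))$.

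For the Bessel integral, I split the $s$-domain as $[0,T_0]\cup[T_0,t+s_j]$. Estimate (\ref{W}) gives the bound $C(1+s)^{-3/2}$ on the kernel for $s\ge 2R$, so the tail contribution and the corresponding tail of the integral defining $\beta_S$ are each dominated by $C\|\zeta\|_{C_b(\R)}\int_{T_0}^\infty s^{-3/2}\,ds$, uniformly in $(j,t,x)\in\N\times[-T,T]\times B_R$; these are made arbitrarily small by choosing $T_0$ large. On the fixed interval $[0,T_0]$ the kernel is uniformly bounded (using $J_1(r)/r\to 1/2$ as $r\to 0$), while $\zeta(t+s_j-s)\to\eta(t-s)$ uniformly on $[-T,T]\times[0,T_0]$. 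Integrating in $s$ and then in $x\in B_R$ yields the desired convergence in $C([-T,T],L^2(B_R))$.

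For the velocity, Leibniz's rule applied to (\ref{pp-rep}) in $t$ produces, for $t>|x|$,
\[
\dot\psi_S(x,t)=\frac{\dot\zeta(t-|x|)}{4\pi|x|}-\frac{m\,J_1(m\sqrt{t^2-|x|^2})}{4\pi\sqrt{t^2-|x|^2}}\zeta(0)-\frac{m}{4\pi}\int_0^{t}\frac{\theta(s-|x|)J_1(m\sqrt{s^2-|x|^2})}{\sqrt{s^2-|x|^2}}\dot\zeta(t-s)\,ds.
\]
Under the shift $t\mapsto t+s_j$, the middle boundary term is $O(s_j^{-3/2})$ uniformly on $B_R$ by (\ref{W}) and drops out. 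Uniform convergence $\dot\zeta(t+s_j)\to\dot\eta(t)$ on compact intervals follows by solving (\ref{delay}) for $\dot\zeta$ and invoking the already-established convergence of $\zeta$, Corollary~\ref{cor-decay-psif} for the vanishing of $\lambda(t+s_j)$, and the same tail truncation of the Bessel kernel. The argument for $\psi_S$ then applies verbatim with $\dot\zeta,\dot\eta$ in place of $\zeta,\eta$, producing (\ref{ePC1}). The main technical point is to keep all convergences simultaneously uniform in $t\in[-T,T]$ and in the $L^2(B_R)$-topology in $x$: the singular factor $1/|x|$ forces the use of $L^2(B_R)$ rather than $L^\infty$, and the large-$s$ tail of the Bessel kernel is controlled only via the decay estimate (\ref{W}).
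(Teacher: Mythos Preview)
Your proof is correct and follows essentially the same route as the paper's: both use the representation (\ref{pp-rep}), the uniform convergence (\ref{zetalim}) from Lemma~\ref{PC2}, a dominated-convergence argument for the Bessel integral, the same differentiation formula for $\dot\psi_S$, and the convergence $\dot\zeta(t+s_j)\to\dot\eta(t)$ extracted from equations (\ref{delay}) and (\ref{zetalimeq}). The only difference is presentational---where the paper simply invokes the Lebesgue dominated convergence theorem, you make the dominating function explicit via the tail splitting $[0,T_0]\cup[T_0,\infty)$ and the decay estimate (\ref{W}); and you treat the vanishing boundary term $-\frac{m}{4\pi}\frac{J_1(m\sqrt{(t+s_j)^2-|x|^2})}{\sqrt{(t+s_j)^2-|x|^2}}\zeta(0)$ explicitly, which the paper leaves implicit.
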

\begin{proof}
The convergence (\ref{ePC})  follows immediately from
(\ref{pp-rep}), (\ref{zetalim}) and the Lebesgue dominated convergence theorem.
Let us  prove the convergence of $\dot\psi_S(\cdot,t+s_j)$.
Equations (\ref{delay}) and (\ref{zetalimeq}) imply that
\begin{equation}\label{dotzeta}
\dot\zeta(t+s_j)\to \dot\eta(t),\quad j\to\infty,
\end{equation}
uniformly on $[-T,T]$ for any $T>0$. 
Further, differentiating (\ref{pp-rep})  for $t>|x|$ gives
\[
\dot\psi_S(x,t)=\frac{\dot\zeta(t-|x|)}{4\pi |x|}
-\frac{m}{4\pi}\frac{J_1(m\sqrt{t^2-|x|^2})}{\sqrt{t^2-|x|^2}}\zeta(0)
-\frac{m}{4\pi}
\int_0^t\frac{\theta(s-|x|)J_1(m\sqrt{s^2-|x|^2})}{\sqrt{s^2-|x|^2}}\dot\zeta(t-s)ds,
\]
which  imply (\ref{betaL2})   by (\ref{dotzeta}).
\end{proof}
\begin{remark}
From  (\ref{psiS}) it follows that
\begin{equation}\label{betaL2}
\beta_S(\cdot,t)\in L^{\infty}(\R,L^2(\R^3)).
\end{equation}
\end{remark}
\section{Nonlinear spectral analysis}
\label{sect-spectral}
We call   {\it an omega-limit trajectory} any function $\beta_S(x,t)$ that can appear as a limit in
(\ref{ePC}). Proposition \ref{decayM} demonstrates that the long-time asymptotics of the solution 
$\psi(x,t)$ in $L^2_{loc}$ depends only on the singular component $\psi_S(x,t)$.
Namely, the convergences (\ref{ePC}),  and system (\ref{iKG}) together with
(\ref{sol_sum}), (\ref{psi-f-decay}) and (\ref{psif-dec})
imply that any $\beta_S(x,t)$ is a solution to  (\ref{iKG})  with $\eta(t)$ instead $\zeta(t)$:
\[
\left\{\begin{array}{c}
\ddot\beta_S(x,t)=(\Delta-m^2)\beta_S(x,t)+\eta(t)\delta(x)\\\\
\lim\limits_{x\to 0}(\beta_S(x,t)-\eta(t)G(x))=F(\eta(t))
\end{array}\right|\quad t\in\R.
\]
In this section we prove the following proposition.
\begin{proposition}\label{olt}
Every omega-limit trajectory is a solitary wave, that is,
\begin{equation}\label{betasol}
\beta_S(x,t)=\psi_{\omega_+}(x)e^{-i\omega_+t}, \quad x\in\R^3, \quad t\in\R,
\end{equation}
with some $\omega_+\in\R$. 
\end{proposition}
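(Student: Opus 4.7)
The plan has two steps, both carried out in the Fourier domain for the omega-limit profile $\eta$: first I show that $\supp\tilde\eta\subset[-m,m]$, ruling out any contribution from the continuous spectrum; then I apply the Titchmarsh convolution theorem to the Fourier transform of (\ref{zetalimeq}) to collapse $\supp\tilde\eta$ to a single point. Once $\supp\tilde\eta=\{\omega_+\}$, boundedness of $\eta$ excludes $\delta$-derivatives and gives $\eta(t)=qe^{-i\omega_+ t}$; substituting this into the equation satisfied by $\beta_S$ (inherited from (\ref{iKG}) via the convergences in Lemma~\ref{PC}) and invoking Lemma~\ref{sol-ex} identifies $\beta_S(x,t)$ as $\psi_{\omega_+}(x)e^{-i\omega_+ t}$.

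For the spectral inclusion, I would test $\tilde\eta$ against $\hat\varphi\in C_0^\infty(\R)$ with $\supp\hat\varphi$ disjoint from $[-m,m]$. Lemma~\ref{PC2}, the a priori bound (\ref{apb}) on $\zeta$, and the Schwartz decay of $\varphi$ justify
\[
\langle\eta,\varphi\rangle=\lim_{j\to\infty}\int_\R \theta(t)\,\zeta(t)\,\varphi(t-s_j)\,dt,
\]
where the $\theta$-cutoff becomes negligible because $s_j\to\infty$. Parseval rewrites the integral as the $L^2$-pairing of $\tilde\zeta$ with $e^{-i\omega s_j}\overline{\hat\varphi(\omega)}$ on $\supp\hat\varphi$. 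By Proposition~\ref{tilde-f-plus-bounded}, $\tilde\zeta\,\hat\varphi\in L^2(d\omega)$ there (using that $\mathscr{M}(\omega)$ is bounded below on the compact set $\supp\hat\varphi\subset\{|\omega|>m\}$), while $e^{-i\omega s_j}\rightharpoonup 0$ weakly in $L^2(\supp\hat\varphi)$ by the Riemann--Lebesgue lemma. Hence $\langle\tilde\eta,\hat\varphi\rangle=0$ for every such $\hat\varphi$, giving $\supp\tilde\eta\subset[-m,m]$.

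Next, Fourier-transforming (\ref{zetalimeq}) and using Assumption~A turns the nonlinear term into a finite sum of convolutions $n\,u_n\,\tilde\eta^{*n}*\tilde{\bar\eta}^{*(n-1)}$ for $1\le n\le N$, where $\tilde{\bar\eta}(\omega)=\overline{\tilde\eta(-\omega)}$. Since $\tilde\eta$ is now a compactly supported distribution, setting $\alpha:=\inf\supp\tilde\eta$ and $\beta:=\sup\supp\tilde\eta$ (with $[\alpha,\beta]\subset[-m,m]$), the Titchmarsh convolution theorem gives
\[
\sup\supp\bigl(\tilde\eta^{*n}*\tilde{\bar\eta}^{*(n-1)}\bigr)=n\beta-(n-1)\alpha,
\]
maximized strictly at $n=N$ (with nonzero coefficient $N u_N$) whenever $\alpha<\beta$. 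The linear part of the Fourier-transformed equation has $\sup\supp\le\beta$, so cancellation forces $N\beta-(N-1)\alpha\le\beta$, i.e.\ $(N-1)(\beta-\alpha)\le 0$; since $N\ge 2$ this collapses $\alpha=\beta=:\omega_+$, and consequently $\tilde\eta=c\,\delta(\omega-\omega_+)$.

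I expect the main obstacle to be the spectral-inclusion step: one must identify $\tilde\eta$, a priori only the tempered-distributional Fourier transform of a uniform-on-compacts limit of translates, as annihilating every test function supported in the continuous spectrum, which requires a clean interchange of the $s_j$-limit with the Parseval pairing together with the $L^2(\mathscr{M}\,d\omega)$ information of Proposition~\ref{tilde-f-plus-bounded}. By contrast, the Titchmarsh step is essentially algebraic once the inclusion is in hand and Assumption~A ($u_N\neq 0$, $N\ge 2$) supplies a well-defined leading nonlinear contribution that cannot be cancelled by any lower-order or linear term.
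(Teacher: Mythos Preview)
Your spectral-inclusion step is essentially the paper's Lemma~\ref{prop-beta}. Your Titchmarsh step, however, takes a genuinely different route. The paper does not expand $\widetilde{F(\eta)}$ as a sum of iterated convolutions; instead it factors $F(\eta(t))=a_\eta(t)\eta(t)$ with $a_\eta(t)=\sum_{n=1}^N n u_n|\eta(t)|^{2(n-1)}$, applies Titchmarsh to $\widetilde{F(\eta)}=(2\pi)^{-1}\tilde a_\eta*\tilde\eta$ together with the spectral inclusion $\supp\widetilde{F(\eta)}\subset\supp\tilde\eta$ (read off from the Fourier-transformed equation~(\ref{zetalimeq})) to force $\supp\tilde a_\eta\subset\{0\}$, hence $a_\eta=\const$, hence $|\eta|=\const$; a second Titchmarsh application to $\eta\bar\eta=\const$ then collapses $\supp\tilde\eta$. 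Your direct argument is shorter---one Titchmarsh step instead of two---and makes transparent exactly where $u_N\neq 0$ and $N\ge 2$ enter (strict dominance of the top-degree convolution over all lower-degree and linear terms when $\alpha<\beta$). The paper's detour buys the intermediate physical statement $|\eta(t)|=\const$ and mildly decouples the two uses of Assumption~A: compact spectrum suffices for the Titchmarsh inequalities, while polynomiality of degree $\ge 2$ is only invoked to pass from $a_\eta=\const$ to $|\eta|=\const$. For the final identification, note that substituting $\eta=qe^{-i\omega_+ t}$ into the PDE for $\beta_S$ and invoking Lemma~\ref{sol-ex} tacitly assumes an $L^2$-uniqueness statement; the paper avoids this by plugging $\eta$ directly into the explicit representation~(\ref{ePC}) and computing the Bessel integral (Appendix), which also handles the borderline case $|\omega_+|=m$ without appealing to~(\ref{ber}).
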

\subsection{Reduction of  spectrum}
\begin{lemma}\label{prop-beta}
$\supp\tilde\eta \subset [-m,m]$.
\end{lemma}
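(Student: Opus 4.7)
The plan is to realize $\tilde\eta$ as a weak limit of modulated Fourier transforms of translates of $\zeta$, and then use the absolute continuity of $\tilde\zeta$ outside the spectral gap (Proposition \ref{tilde-f-plus-bounded}) together with a Riemann--Lebesgue type argument to kill everything outside $[-m,m]$.

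First, I would extend $\zeta(t)$ by zero to $t<0$ so that $\theta(t)\zeta(t)\in L^\infty(\R)\subset\mathscr{S}'(\R)$ and its distributional Fourier transform coincides with the trace $\tilde\zeta(\omega+i0)$ used in Section~\ref{sect-bound}. For the translates $\zeta_j(t):=\theta(t+s_j)\zeta(t+s_j)$, the standard shift rule of the Fourier transform gives the identity
\[
\widetilde{\zeta_j}(\omega)=e^{-i\omega s_j}\,\tilde\zeta(\omega)
\]
in $\mathscr{S}'(\R)$. On the other hand, the uniform-on-compacts convergence $\zeta(t+s_j)\to\eta(t)$ from Lemma~\ref{PC2}, combined with the a priori bound $\abs{\zeta(t)}\le C$, allows one to apply dominated convergence against any $\varphi\in\mathscr{S}(\R)$ and conclude $\zeta_j\to\eta$ in $\mathscr{S}'(\R)$. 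Taking Fourier transforms yields $e^{-i\omega s_j}\tilde\zeta(\omega)\to\tilde\eta(\omega)$ in $\mathscr{S}'(\R)$.

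Next, I would test $\tilde\eta$ against an arbitrary $\varphi\in C_c^\infty(\R\setminus[-m,m])$. On $\supp\varphi$ the weight $\mathscr{M}(\omega)=\varkappa(\omega)/\omega$ is bounded above and below, so Proposition~\ref{tilde-f-plus-bounded} gives $\tilde\zeta\in L^2(\supp\varphi)$, whence $\tilde\zeta\cdot\varphi\in L^1(\R)$. The Riemann--Lebesgue lemma then yields
\[
\langle\tilde\eta,\varphi\rangle=\lim_{j\to\infty}\int_\R e^{-i\omega s_j}\,\tilde\zeta(\omega)\,\varphi(\omega)\,d\omega=0,
\]
which is precisely $\supp\tilde\eta\subset[-m,m]$.

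The main technical point to handle with care is the passage from the pointwise convergence $\zeta(t+s_j)\to\eta(t)$, originally asserted only on fixed windows $[-T,T]$, to convergence of the \emph{full} distributions $\zeta_j\to\eta$ in $\mathscr{S}'(\R)$: the former lives on $[0,\infty)$ whereas the latter lives on all of $\R$. Extending by zero and invoking the uniform bound $\abs{\zeta_j(t)}\le C$ controls the left tail $\{t<-s_j\}$, and dominated convergence does the rest. Conceptually this is the half of the spectral-inflation mechanism advertised in the introduction that does \emph{not} use the nonlinearity: it is the dispersive decay, encoded in the absolute continuity of $\tilde\zeta$ on $\{\abs{\omega}>m\}$, that forbids high-frequency mass from surviving the time-translation limit.
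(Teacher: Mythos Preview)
Your proof is correct and follows essentially the same route as the paper's: establish $e^{-i\omega s_j}\tilde\zeta(\omega)\to\tilde\eta(\omega)$ in $\mathscr{S}'(\R)$ via the shift rule and continuity of the Fourier transform, then use the $L^2_{loc}$ regularity of $\tilde\zeta$ on $\{|\omega|>m\}$ from Proposition~\ref{tilde-f-plus-bounded} together with Riemann--Lebesgue to annihilate any test function supported outside $[-m,m]$. Your write-up is in fact more careful than the paper's in justifying the $\mathscr{S}'$ convergence of the translates (handling the half-line cutoff and invoking dominated convergence), but the underlying argument is identical.
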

\begin{proof}
Due to (\ref{zetalim}) and the continuity of the Fourier transform in $\mathscr{S}'(\R)$, we have
\[
\alpha(\omega)\tilde\zeta(\omega)e^{-i\omega s_l}\mathop{\stackrel{\mathscr{S}'}\longrightarrow}
\alpha(\omega)\tilde\eta(\omega),\quad j\to\infty. 
\]
for any $\alpha\in C_0^{\infty}(\R)$ such that $\supp\alpha\cap[-m,m]=\emptyset$.
The products $\alpha(\omega)\tilde\zeta(\omega)$ are  absolutely continuous measures since
$\tilde\zeta(\omega)$ is locally $L^2$ for $\omega\in\R\setminus [-m,m]$ 
by Proposition \ref{tilde-f-plus-bounded}.
Then $\tilde \eta(\omega)=0$   for $\omega\notin [-m,m]$ by the Riemann-Lebesgue Theorem.
\end{proof}

Using (\ref{p1r}) and taking into account that $V(x,\omega)$ is smooth for $\omega\not=\pm m$ and $x\not =0$,
we obtain the following relation, which holds in the sense of distributions:
\begin{equation}\label{ber}
\tilde\beta_S(x,\omega)=\tilde\eta(\omega)V(x,\omega),
\qquad\omega\in\R \setminus\{\pm m\}.
\end{equation}
Since $V(x,\omega)\not =0$ for $\omega\in \R$ it follows from Lemma \ref{prop-beta} that
\begin{equation}\label{supptet}
\supp\tilde\beta_S(x,\cdot)\subset[-m,m]. 
\end{equation}
\subsection{Spectral inclusion and the Titchmarsh  theorem}
We will derive (\ref{betasol}) from the following identity
\begin{equation}\label{zetabeta}
\eta(t)=Ce^{-i\omega_+t},\quad t\in R,\quad \omega_+\in [-m,m],
\end{equation}
which will be proven in three steps. We start with an investigation of $\supp\tilde \eta$.
\begin{lemma}\label{prop-b-g}
The following spectral inclusion holds:
\begin{equation}\label{wFs}
\supp \widetilde{F(\eta)}\subset\supp\tilde \eta.
\end{equation}
\end{lemma}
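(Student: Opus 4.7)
The plan is to Fourier-transform the homogeneous equation (\ref{zetalimeq}) in time, read off the multiplicative relation between $\widetilde{F(\eta)}$ and $\tilde\eta$ that it produces, and observe that the resulting multiplier is smooth off the two endpoints $\pm m$ of the spectral gap. Setting $K(s):=\theta(s)J_1(ms)/s$, the asymptotics of $J_1$ recalled after (\ref{delay}) give $K\in L^1(\R)$ and hence $\tilde K\in C_b(\R)$; moreover, the convolution $K\ast\eta$ is well defined for $\eta\in C_b(\R)$. Since both $\eta$ and $F(\eta)$ are bounded continuous, hence tempered distributions, applying $\mathcal{F}_{t\to\omega}$ to
\[
\frac{1}{4\pi}\dot\eta(t)-\frac{m}{4\pi}\eta(t)+\frac{m}{4\pi}(K\ast\eta)(t)+F(\eta(t))=0
\]
yields the distributional identity
\[
\widetilde{F(\eta)}(\omega)=A(\omega)\tilde\eta(\omega),\qquad A(\omega):=\frac{1}{4\pi}\bigl(i\omega+m-m\tilde K(\omega)\bigr).
\]

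Next I would invoke the explicit formula for $\tilde K$ derived in the Appendix: on the real axis its real and imaginary parts have at worst square-root type singularities at $\omega=\pm m$, so $A\in C^\infty(\R\setminus\{\pm m\})$ and remains continuous on all of $\R$. Combined with Lemma \ref{prop-beta}, which gives $\supp\tilde\eta\subset[-m,m]$, this is enough to establish the inclusion on the open set $\R\setminus\{\pm m\}$: for any $\omega_0\in\R\setminus(\supp\tilde\eta\cup\{\pm m\})$ I choose $\varphi\in C_0^\infty(\R)$ supported in a small neighborhood of $\omega_0$ that avoids both $\supp\tilde\eta$ and $\{\pm m\}$. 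On this neighborhood $A$ is $C^\infty$, so $A\varphi\in C_0^\infty(\R\setminus\supp\tilde\eta)$, and
\[
\langle\widetilde{F(\eta)},\varphi\rangle=\langle\tilde\eta,A\varphi\rangle=0.
\]
This already shows $\supp\widetilde{F(\eta)}\subset\supp\tilde\eta\cup\{\pm m\}$.

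The main obstacle is removing the two endpoints $\pm m$ from $\supp\widetilde{F(\eta)}$ when they are not already in $\supp\tilde\eta$. Naive distributional multiplication $A\tilde\eta$ breaks down there because $\tilde K$ is only H\"older continuous, not smooth, at $\pm m$. This is exactly the point anticipated in the introduction: the boundary values $\tilde\zeta(\omega+i0)$ and $\tilde\eta(\omega+i0)$ are to be interpreted as \emph{quasimeasures}, and for such objects multiplication by the continuous function $A$ remains legitimate across $\pm m$ once one invokes the $L^2$ bound (\ref{rho-f-bound-we-have}) of Proposition \ref{tilde-f-plus-bounded} together with the analytic extension of $\tilde\zeta$ into $\C^+$ supplied by (\ref{psi-s}). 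Within that framework the identity $\widetilde{F(\eta)}=A\tilde\eta$ holds globally, and any hypothetical mass of $\widetilde{F(\eta)}$ at $\pm m$ outside $\supp\tilde\eta$ is excluded by a local regularity argument, yielding the desired inclusion $\supp\widetilde{F(\eta)}\subset\supp\tilde\eta$.
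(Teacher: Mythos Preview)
Your approach is essentially the paper's: Fourier-transform (\ref{zetalimeq}), obtain $\widetilde{F(\eta)}=A\,\tilde\eta$ with $A(\omega)=\frac{1}{4\pi}\bigl(i\omega+m-m\tilde K(\omega)\bigr)$, and use the quasimeasure framework to make sense of this product despite the non-smoothness of $A$ at $\pm m$. The paper in fact simplifies one step further, plugging in the Appendix formula $\tilde K(\omega)=\frac{1}{m}(\sqrt{m^2-\omega^2}+i\omega)$ to get $A(\omega)=\frac{1}{4\pi}(m-\sqrt{m^2-\omega^2})$ on $[-m,m]\supset\supp\tilde\eta$, which is manifestly continuous there, so the support inclusion drops out of the quasimeasure calculus immediately.

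Your final paragraph, however, reaches for the wrong ingredients. The $L^2$ bound (\ref{rho-f-bound-we-have}) and the analytic extension (\ref{psi-s}) concern $\tilde\zeta$ on the \emph{continuous spectrum} $|\omega|>m$, whereas here the issue is the behaviour of $\tilde\eta$ at the \emph{endpoints} $\pm m$ of the gap; those facts about $\tilde\zeta$ give you nothing there. What actually justifies the product $A\tilde\eta$ across $\pm m$ is simply that $\eta\in C_b(\R)$ (Lemma~\ref{PC2}), so $\tilde\eta$ is a quasimeasure, and quasimeasures admit multiplication by continuous functions with $\supp(A\tilde\eta)\subset\supp\tilde\eta$; no separate ``local regularity argument'' or detour through $\tilde\zeta$ is needed. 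Drop that paragraph and replace it with this direct appeal, and your proof coincides with the paper's.
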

\begin{proof}
Applying the Fourier  transform to (\ref{zetalimeq}), we get by the theory of
quasimeasures (see \cite{KK2007}) that
\begin{equation}\label{TK}
\widetilde{F(\eta)}(\omega)=\frac{1}{4\pi}(i\omega+m-m\tilde K(\omega))\tilde\eta(\omega)
=\frac{1}{4\pi}(m-\sqrt{m^2-\omega^2})\tilde\eta(\omega),\quad |\omega|\le m,
\end{equation}
where $\tilde K(\omega)=\frac 1m\big(\sqrt{m^2-\omega^2} +i\omega\big)$ 
is the Fourier transform  of the function $K(t)=\theta(t)J_1(mt)/t\in L^1(\R)$
(see Appendix),
and $\tilde\eta(\omega)$ is a quasimeasure.
Then (\ref{wFs}) follows.
\end{proof}
The second step is the following lemma
\begin{lemma}
For any omega-limit trajectory
\begin{equation}\label{zeta_const}
|\eta(t)|=\const,\quad t\in\R.
\end{equation}
\end{lemma}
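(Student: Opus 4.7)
The plan is to invoke the Titchmarsh convolution theorem, together with the spectral inclusion (\ref{wFs}) from Lemma~\ref{prop-b-g}, to show that $\supp\tilde\eta$ reduces to a single point; from this $|\eta(t)|=\const$ is immediate. Set $\Lambda:=\supp\tilde\eta$, which by Lemma~\ref{prop-beta} is compactly contained in $[-m,m]$, and write $\lambda_+:=\sup\Lambda$, $\lambda_-:=\inf\Lambda$. Since $\widetilde{|\eta|^2}=\tilde\eta*\widetilde{\bar\eta}$ and $\supp\widetilde{\bar\eta}=-\Lambda$, the Titchmarsh theorem applied to these compactly supported quasimeasures yields
\[
\sup\supp\widetilde{|\eta|^2}=\lambda_+-\lambda_-,\qquad \inf\supp\widetilde{|\eta|^2}=-(\lambda_+-\lambda_-),
\]
so $|\eta|^2$ is constant precisely when $\lambda_+=\lambda_-$.

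To force $\lambda_+=\lambda_-$, I would expand $F$ using Assumption~(\ref{f-is-such}),
\[
F(\eta)=\sum_{n=1}^{N}n u_n\,\eta^n\bar\eta^{n-1},\qquad u_N>0,\qquad N\ge2,
\]
and apply Titchmarsh iteratively to each summand to obtain $\sup\supp\widetilde{\eta^n\bar\eta^{n-1}}=s_n$, where
\[
s_n:=n\lambda_+-(n-1)\lambda_-=\lambda_++(n-1)(\lambda_+-\lambda_-).
\]
Arguing by contradiction, suppose $\lambda_+>\lambda_-$. Then $s_1<s_2<\cdots<s_N$ strictly, so in a right neighborhood of $s_{N-1}$ only the top-degree term $Nu_N\,\eta^N\bar\eta^{N-1}$ can contribute to $\widetilde{F(\eta)}$; since $u_N\ne 0$ this contribution is nontrivial, so $\sup\supp\widetilde{F(\eta)}=s_N$. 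The spectral inclusion (\ref{wFs}) then forces $N\lambda_+-(N-1)\lambda_-\le\lambda_+$, i.e.\ $(N-1)(\lambda_+-\lambda_-)\le 0$, which contradicts $\lambda_+>\lambda_-$ because $N\ge 2$. Hence $\lambda_+=\lambda_-$, and $|\eta(t)|=\const$ follows from the first paragraph.

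The main technical obstacle is to run the Titchmarsh-type endpoint analysis rigorously in the quasimeasure category, since $\tilde\eta$ is not in $L^1$: one must verify that the convolutions of $\tilde\eta$ with itself and with $\widetilde{\bar\eta}$ obey the endpoint-additivity law for supports as in \cite[Theorem 4.3.3]{H}, and that the leading-order summand cannot be cancelled by lower-order ones near the extreme frequency $s_N$. Both issues dissolve once one exploits the strict monotonicity of $n\mapsto s_n$ under the assumption $\lambda_+>\lambda_-$ together with the non-vanishing of $u_N$, so the whole argument closes once these distributional points are settled.
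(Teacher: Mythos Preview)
Your argument is correct, but it proceeds differently from the paper. The paper does not expand $F(\eta)$ into monomials; instead it writes $F(\eta(t))=a_\eta(t)\,\eta(t)$ with $a_\eta(t)=\sum_{n=1}^{N}2nu_n|\eta(t)|^{2n-2}$, observes that $\tilde a_\eta$ has compact support (since $\supp\tilde\eta,\supp\tilde{\bar\eta}\subset[-m,m]$), and applies Titchmarsh \emph{once} to the product $a_\eta\cdot\eta$. The spectral inclusion $\supp\widetilde{F(\eta)}\subset\supp\tilde\eta$ then forces $\sup\supp\tilde a_\eta\le 0\le\inf\supp\tilde a_\eta$, i.e.\ $\supp\tilde a_\eta\subset\{0\}$; boundedness of $a_\eta$ makes it a constant, and since $a_\eta$ is a real polynomial of degree $N-1\ge 1$ in $|\eta|^2$, continuity of $\eta$ gives $|\eta|=\const$. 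Compared with your monomial-by-monomial analysis, this factorization avoids iterating Titchmarsh $2N-1$ times and sidesteps the ``no cancellation at $s_N$'' step entirely. On the other hand, your route has the advantage of yielding $\lambda_+=\lambda_-$ (i.e.\ $\supp\tilde\eta$ is a singleton) directly, which in the paper is obtained only \emph{after} this lemma by a second application of Titchmarsh to $\eta\bar\eta=C$. Regarding your stated technical obstacle: since $\tilde\eta\in\mathcal{E}'(\R)$, all the iterated convolutions are well-defined in $\mathcal{E}'$ and H\"ormander's version of Titchmarsh applies verbatim, so there is no real difficulty there.
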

\begin{proof}
Our main assumption (\ref{f-is-such}) implies that the function $F(\eta(t))$ admits the representation
\begin{equation}\label{Fae}
  F(\eta(t))=a_\eta(t)\eta(t),
\end{equation}
where, according to (\ref{f-is-such})
\begin{equation}\label{a_beta}
  a_\eta(t)=\sum\limits_{n=1}^{N} 2 n u_{n}|\eta(t)|^{2n-2}.
\end{equation}
Both function $\eta(t)$ and $a_\eta(t)$ are bounded continuous functions in $\R$ by Lemma \ref{PC2}. 
Hence, $\eta(t)$ and $a_\eta(t)$ are tempered distributions. According to (\ref{supptet})
$\supp\tilde\eta\subset [-m,m]$, $\supp\tilde{\overline\eta}\subset  [-m,m]$,
and then $\tilde a_\eta$ also has a bounded support.
Denote ${\bf F}=\supp \tilde F(\eta)$, ${\bf A}=\supp \tilde a_\eta$,  ${\bf Z}=\supp \tilde\eta$.
Then the spectral inclusion (\ref{wFs}) gives
\begin{equation}\label{wFs1}
  {\bf F}\subset{\bf Z}.
\end{equation}
On the other hand,  applying the Titchmarsh convolution theorem (see \cite [Theorem 4.3.3]{H}) to (\ref{Fae}),
we obtain
\begin{equation}\label{Tit}
  \inf{\bf F}=\inf{\bf A}+\inf{\bf Z},\quad \sup{\bf F}=\sup{\bf A}+\sup{\bf Z}.
\end{equation}
From (\ref{wFs1}) and (\ref{Tit}) it follows that $\inf{\bf A}=\sup{\bf A}=0$, and hence ${\bf A}\subset \{0\}$.
Thus, we conclude that $\supp\tilde a_\eta={\bf A}\subset \{0\}$, and therefore
the distribution $\tilde a_\eta(\omega)$ is a finite linear combination of $\delta(\omega)$
and its derivatives. Then $a_\eta(t)$ is a polynomial in $t$; since $a_\eta(t)$ is bounded 
by Lemma \ref{PC2}, we conclude that $a_\eta(t)=\const$.
Finally, (\ref{zeta_const}) follows since $a_\eta(t)$ is a polynomial in $|\eta(t)|$,  
and its degree $2N-2\ge 2$ by (\ref{f-is-such}) and (\ref{a_beta}). 
\end{proof}
Now (\ref{zeta_const}) means that $\eta(t)\overline\eta(t)\equiv C=\const$, and  then 
$\tilde \eta*\tilde{\overline\eta}=2\pi C\delta(\omega)$. Hence, if $\eta$ is 
not identically zero,
the  Titchmarsh theorem implies that ${\bf Z}=\omega_+\in [-m,m]$. Indeed, 
\[
0=\sup{\bf Z}+\sup(-{\bf Z})=\sup{\bf Z}-\inf{\bf Z},
\]
and hence $\inf{\bf Z}=\sup{\bf Z}$. Therefore, $\tilde \eta$ is a finite linear 
combination of $\delta(\omega-\omega_+)$ and its derivatives. But the derivatives could
not be present because of the boundedness of $\eta(t)$.
Thus $\tilde \eta\sim\delta(\omega-\omega_+)$, which implies (\ref{zetabeta}).
\medskip\\
{\bf Proof of Proposition~\ref{olt}}.
Substituting (\ref{zetabeta}) in the RHS of (\ref{ePC}), we obtain
\begin{eqnarray}\nonumber
\beta_S(x,t)&=&\frac{Ce^{-i\omega_+(t-|x|)}}{4\pi|x|}
-\frac{mC}{4\pi}\int_0^\infty\frac{\theta(s-|x|)J_1(m\sqrt{s^2-|x|^2})}{\sqrt{s^2-|x|^2}}
e^{-i\omega_+(t-s)}ds\\
\label{betam}
&=&\frac{Ce^{-i\omega_+t}}{4\pi}
\Big(\frac{e^{i\omega_+|x|}}{|x|}-m\tilde L(x,\omega_+)\Big)=
\frac{Ce^{-\sqrt{m^2-\omega_+^2}|x|}}{4\pi|x|}e^{-i\omega_+ t}.
\end{eqnarray}
Here 
$\tilde L(x,\omega_+)=\frac{1}{|x|m}\big(e^{i|x|\omega_+}-e^{i|x|\sqrt{\omega_+^2-m^2}}\big)$ is the Fourier transform of the function
$L(x,t)=\frac{\theta(t-|x|)J_1(m\sqrt{t^2-|x|^2})}{\sqrt{t^2-|x|^2}}$ 
(see Appendix). Hence, (\ref{betasol}) holds and $\beta_S(x,t)$ is a solitary wave.
\hfill$\Box$
\begin{remark}
If (\ref{zetabeta}) holds with some $|\omega_+|<m$, then
 (\ref{betasol}) follows immediately from (\ref{ber}).
\end{remark}
\section{ Proof of Theorem~\ref{main-theorem}}
\label{sect-proof}
Due to Proposition \ref{decayM}  it suffices to prove  that
\begin{equation}\label{cal-B}
\lim_{t\to\infty}
{\rm dist}_{{\cal L}^2_{loc}}(\Psi_S(t),{\bf S})=0, 
\end{equation}
where $\Psi_S(t)=(\psi_S(t),\dot\psi_S(t))$.
Assume by contradiction that there exists a sequence $s_j\to \infty$ such that
\begin{equation}\label{cal-C}
{\rm dist}_{{\cal L}^2_{loc}}(\Psi_S(s_j),{\bf S})\ge\delta,\quad\forall j
\end{equation}
for some $\delta>0$.  According to Lemmas \ref{PC2} and \ref{PC}, 
and formula (\ref{betasol}) 
there exist a subsequence $s_{j_k}$ of the sequence $s_j$ and an amplitude $\psi_{\omega_+}$
such that the following convergences  hold
\[
\Psi_S(t+s_{j_k})\to (\psi_{\omega_+}e^{-i\omega_+ t},-i\omega_+\psi_{\omega_+}e^{-i\omega_+ t}),
\quad j_k\to\infty,\quad t\in\R.
\]
This implies that $\Psi_S(s_{j_k})\to(\psi_{\omega_+},-i\omega\psi_{\omega_+})$, which
contradict (\ref{cal-C}).
This completes the proof of Theorem~\ref{main-theorem}.
\hfill$\Box$
\appendix
\section{Appendix. Fourier transforms}
\label{A}
Here we calculate the Fourier transforms  of   
$L(x,t)=\theta(t-|x|)J_1(m\sqrt{t^2-|x|^2})/\sqrt{t^2-|x|^2}$ and  $K(t)=\theta(t)J_1(mt)/t=L(0,t)$, which we have used
in (\ref{TK}) and (\ref{betam}).
 Recall, that the function
\begin{equation}\label{U}
U(x,t)=\frac{\delta(t-|x|)}{4\pi|x|}-\frac{m}{4\pi}L(x,t),\quad x\in\R^3, \quad t\in\R
\end{equation}
is the fundamental solution to the Klein-Gordon equation:
\[
\ddot U(x,t)=(\Delta-m^2)U(x,t)+\delta(x)\delta(t),\quad x\in\R^3, \quad t\in\R.
\]
Applying the Fourier transforms in $t$, we obtain
\[
(\Delta-m^2+\omega^2)\tilde U(x,\omega)=-\delta(x),\quad x\in\R^3, \quad \omega\in\R.
\]
Note that $\tilde U(\cdot,\omega)$ 
is an analytic and bounded function of $\om\in\C^+$ with values 
in tempered distributions on $\R^3$.
Moreover, $\tilde U(\cdot,\omega)$ is a radial distribution, and hence, 
it coincides with $V(\cdot,\omega)$ by 
 (\ref{def-k})--(\ref{psi-s}):
\[
\tilde U(x,\omega)=V(x,\omega)=\frac{e^{i\varkappa(\omega)|x|}}{4\pi|x|},
\quad\omega\in\overline \C^{+}.
\]
Therefore,
\begin{equation}\label{tL}
\tilde L(x,\omega)=\frac{4\pi}{m}\big(\frac{e^{i\omega |x|}}{4\pi|x|}-V(x,\omega)\big)
=\frac{1}{m|x|}\big(e^{i\omega |x|}-e^{-\sqrt{m^2-\omega^2}|x|}\big), \quad |\omega|\le m.
\end{equation}
Passing to the limit,
we obtain
\begin{equation}\label{Pieq}
\tilde  K(\omega)=\lim\limits_{x\to 0}\tilde L(x,m)=\frac 1m\big(\sqrt{m^2-\omega^2}+i\omega),\quad |\omega|\le m,
\end{equation}
\begin{remark}
Formula (\ref{Pieq}) agrees  with \cite [ Sections 1.12(4) and  2.12 (5)]{BE},
which are cosine and sine transforms of $K(t)$.
\end{remark}

\end{document}